\theoremstyle{plain}
\theoremstyle{definition}
\newtheorem{theorem}{Theorem}
\newtheorem{proposition}[theorem]{Proposition}
\newtheorem{corollary}[theorem]{Corollary}
\newtheorem{remark}[theorem]{Remark}
\newtheorem{definition}[theorem]{Definition}
\journal{European Journal of Operational Research}
\def\R{\mathbb{R}}
\def\Z{\mathbb{Z}}
\def\S{\mathbb{S}}
\def\Ss{\mathbb{S}}
\def\Sn{\S^n}
\def\Rn{\R^n}
\newcommand{\hessp}{\nabla^2_{\textsc{\tiny BFGS}} \pCQP(Q_p)}
\newcommand{\pQKP}{p^*_{\textsc{\tiny QKP}}}
\newcommand{\pCQP}{p^*_{\textsc{\tiny CQP}}}
\newcommand{\param}{param^*_{\textsc{\tiny QKP}}}
\newcommand{\LCI}{\textbf{LCI}\,}
\newcommand{\LCIp}{\textbf{LCI}}
\newcommand{\SKILS}{\textbf{SKILS}\,}
\newcommand{\SKILSp}{\textbf{SKILS}}
\newcommand{\ECI}{\textbf{ECI}\,}
\newcommand{\ECIp}{\textbf{ECI}}
\newcommand{\CI}{\textbf{CI}\,}
\newcommand{\CIp}{\textbf{CI}}
\newcommand{\CILS}{\textbf{CILS}\,}
\newcommand{\CILSp}{\textbf{CILS}}
\newcommand{\SCILS}{\textbf{SCILS}\,}
\newcommand{\SCILSp}{\textbf{SCILS}}
\newcommand{\SCI}{\textbf{SCI}\,}
\newcommand{\SCIp}{\textbf{SCI}}
\newcommand{\KPol}{\textbf{KPol}\,}
\newcommand{\KPolp}{\textbf{KPol}}
\newcommand{\QKP}{\textbf{QKP}\,}
\newcommand{\QKPp}{\textbf{QKP}}
\newcommand{\QSDP}{\textbf{QSDP}\,}
\newcommand{\MILP}{\textbf{MILP}\,}
\newcommand{\MIQP}{\textbf{MIQP}\,}
\newcommand{\LSDP}{\textbf{LSDP}\,}
\newcommand{\cut}{\textbf{cut}}
\newcommand{\ConvRel}{\textbf{CRel}\,}
\newcommand{\ConvRelp}{\textbf{CRel}}
\newcommand{\LPR}{\textbf{LPR}\,}
\newcommand{\QPR}{\textbf{QPR}\,}
\newcommand{\CQP}{\textbf{CQP}\,}
\newcommand{\textdef}[1]{\textit{#1}\index{#1}}
\DeclareMathOperator{\conv}{{conv}}
\DeclareMathOperator{\trace}{{trace}}
\DeclareMathOperator{\svec}{{svec}}
\DeclareMathOperator{\sMat}{{sMat}}
\begin{document}

\begin{frontmatter}

\title{
	Parametric Convex Quadratic Relaxation	of the
	Quadratic Knapsack	Problem
}


%
%
%
%

\author[UFRJ,Corr]{M. Fampa}
\ead{fampa@cos.ufrj.br}
\author[UFRJ]{D. Lubke}
\ead{danielalubke@cos.ufrj.br}
\author[RIT]{F. Wang}
\ead{fewa@kth.se}
\author[UWat]{H. Wolkowicz}
\ead{hwolkowicz@uwaterloo.ca}

\address[UFRJ]{Universidade Federal do Rio de Janeiro, Brazil}
\address[RIT]{Dept. of Mathematics, Royal Institute of Technology, Sweden}
\address[UWat]{ Dept. of Combinatorics and Optimization, University of Waterloo, Waterloo, Canada}

\fntext[Corr]{Corresponding author: Marcia Fampa, PESC/COPPE, Universidade Federal do Rio de Janeiro, 
Caixa postal 68511, 
Rio de Janeiro, RJ, 21941-972,
Brazil. Email: fampa@cos.ufrj.br}

\begin{abstract}
We consider a parametric convex quadratic programming (CQP)
relaxation for the quadratic knapsack problem (QKP). This relaxation maintains partial quadratic information from the original QKP by perturbing the objective function to obtain a concave quadratic term.  The nonconcave part generated by the perturbation is then linearized by a standard approach that lifts the problem to matrix space. 
We present a primal-dual interior point method to optimize the perturbation of the quadratic function, in a search for the tightest upper bound for the QKP. We prove that the same perturbation approach, when applied in the context of semidefinite programming (SDP) relaxations  of the QKP, cannot improve the upper bound given by the corresponding  linear SDP relaxation. The result also applies to more general integer quadratic problems.  
Finally, we propose new valid inequalities on the lifted matrix variable, derived from cover and knapsack inequalities for the QKP, and present separation problems to generate cuts for the current solution of the CQP  relaxation.
Our best bounds are obtained alternating between
optimizing the parametric quadratic re\-lax\-a\-tion
over the perturbation and applying cutting planes generated by the valid inequalities proposed.
\end{abstract}

\begin{keyword}
quadratic knapsack problem \sep quadratic binary programming \sep convex quadratic programming relaxations\sep
parametric optimization \sep valid inequalities \sep separation problem

\medskip
\MSC
90C26  
 \sep
90C25  
 \sep
65K05  
\end{keyword}

\end{frontmatter}

\section{Introduction}
We study  a convex quadratic programming (CQP) relaxation of the quadratic knapsack problem (QKP),
\hypertarget{modelqkp}{
\begin{equation}
\label{eq:qkp}
\text{(\QKP)} \qquad  \qquad
\begin{array}{rrl}
	\pQKP:= &\max & x^T Qx \\
	&\text{s.t.} & w^T x \leq c \\
	&& x\in \{0,1\}^n,
\end{array}
\index{optimal value \QKPp, $\pQKP$}
\index{$\pQKP$, optimal value \QKPp}
\index{$Q$, profit matrix}
\index{profit matrix, $Q$}
\index{$c$, knapsack capacity}
\index{knapsack capacity, $c$}
\index{$w$, item weights}
\index{item weights, $w$}
\end{equation}
}
where $Q\in \Sn$ is a symmetric $n\times n$ nonnegative integer  profit
matrix, $w\in \Z^n_{++}$ is a vector of positive integer weights for the items,
and $c\in \Z_{++}$  is the knapsack capacity with
$c\geq w_i,$ for all $i\in N:=\{1,  \ldots,n\}$. 
The binary (vector)  variable $x$ indicates which items are chosen for the
knapsack, and the inequality in the model, known as a  knapsack inequality, ensures that the selection of items does not exceed the knapsack capacity. We note that any linear costs in the objective can
be included on the diagonal of $Q$ by exploiting the $\{0,1\}$
constraints and, therefore,  are not considered.

The QKP was introduced in \cite{gallo} and was proved to be NP-Hard in the strong sense by reduction from the clique problem. The quadratic knapsack problem is a generalization of the knapsack problem, which has the same feasible set of the QKP, and a linear objective function in $x$. The linear knapsack problem can be solved in pseudo-polynomial time using dynamic programming approaches with complexity of $O(nc)$.

The QKP appears in a wide variety of fields, such as biology, logistics, capital budgeting, telecommunications and graph theory, 
and has received a lot of attention in the last decades. Several papers have proposed branch-and-bound algorithms for the QKP, and the main difference between them is the method used to obtain upper bounds for the subproblems
\cite{Chaillou89,Billionnet14,Caprara99,
HelRenWeis:96,MR1772826}. The well known trade-off between the strength of the bounds and the computational effort required to obtain them is intensively discussed in \cite{Pisinger2007}, where semidefinite programming (SDP) relaxations proposed in \cite{HelRenWeis:96} and \cite{MR1772826} are presented as the strongest relaxations for the QKP. The linear programming (LP) relaxation proposed in \cite{Billionnet14}, on the other side, is presented as the most computationally inexpensive. 

Both the SDP and the LP relaxations have a common feature, they are defined in the symmetric matrix lifted space determined by the equation $X=xx^T$, and by the replacement of the quadratic objective function in \hyperlink{modelqkp}{\QKPp} with a linear function in $X$, namely,  
$\trace (QX)$. As the constraint $X=xx^T$ is nonconvex, it is relaxed by convex constraints in the relaxations. The well known McCormick inequalities  \cite{McCormick-1976}, and also the semidefinite constraint, $X-xx^T\succeq 0$, 
have  been extensively used to relax the nonconvex constraint $X=xx^T$, in relaxations of the QKP. 

In this paper, we investigate a  CQP relaxation for the QKP, where instead of linearizing the objective function, we perturb the objective function Hessian $Q$,  and maintain the (concave) perturbed version of the quadratic function in the objective,  linearizing only the  remaining part derived from the perturbation. Our relaxation is a  parametric convex quadratic problem, defined as a function of a matrix parameter $Q_p$, such that $Q-Q_p\preceq 0$. This matrix parameter is iteratively optimized by a primal-dual interior point method (IPM) to generate the best possible bound for the QKP. During this iterative procedure, valid cuts are added to the formulation to strengthen the relaxation, and the search for the best perturbation is adapted accordingly. Our procedure  alternates between
optimizing the matrix parameter and applying cutting planes generated by valid inequalities. At each iteration of the procedure, a new bound for the QKP is computed, considering  the updated matrix parameter and the cuts already added to the relaxation.


A similar approach to handle nonconvex quadratic functions consists in decomposing it as a difference of convex (DC) quadratic function \cite{Horst1999}. DC decompositions have been extensively used in the literature to generate convex quadratic relaxations of nonconvex quadratic problems. See, for example, \cite{Fampa2017} and references therein. Unlike the approach used in DC decompositions, we do not necessarily decompose $x^T Qx$ as a difference of convex functions, or equivalently, as a sum of a convex and a concave function. Instead, 
we decompose it as a sum of a concave function and a quadratic term derived from the perturbation applied to $Q$. This perturbation can be any symmetric matrix $Q_p$, such that  $Q-Q_p\preceq 0$. 

In an attempt to obtain stronger bounds, we also investigated the  parametric convex quadratic SDP problem, where we add to our CQP relaxation, the positive  semidefinite constraint $X-xx^T\succeq 0$. An IPM  could also be applied to this parametric problem in order to generate the best possible bound. Nevertheless, we prove an interesting result concerning the relaxations, in case the constraint $X-xx^T\succeq 0$ is imposed: the tightest bound generated by the parametric quadratic SDP relaxation is obtained when the perturbation $Q_p$ is equal to $Q$, or equivalently, when we linearize the entire objective function, obtaining the standard linear SDP relaxation. We conclude, therefore, that keeping  the (concave) perturbed version of the quadratic function in the objective of the SDP relaxation does not lead to a tighter bound. 



Another contribution of this work is the development of valid inequalities for the CQP relaxation on the lifted matrix variable. The inequalities are first derived from cover inequalities for the knapsack problem. The idea is then extended to knapsack inequalities. Taking advantage of the lifting $X:=xx^T$, we propose new valid inequalities that can also be applied to more general relaxations of binary  quadratic programming problems that use the same lifting.  We discuss how cuts for the quadratic relaxation  can be obtained by the solution of separation problems, and investigate possible dominance relation between the inequalities proposed.

Finally, we present our  algorithm \verb;CWICS; (Convexification With Integrated Cut Strengthening), where we iteratively improve the upper bound for the QKP by optimizing the choice of the perturbation of the objective function and adding cutting planes to the relaxation. At each iteration, lower bounds for the problem are also generated from feasible solutions constructed from a rank-one approximation of the solution of the CQP relaxation. 

In Section \ref{sect:convexification}, we introduce our parametric convex quadratic relaxation for the QKP. In Section \ref{sec:OptParametric}, we explain how we optimize the parametric problem over the perturbation of the objective; i.e.,  we present the IPM applied to obtain the perturbation that leads to the best possible bound. In Section \ref{sec:sdprel}, we present our conclusion about the parametric quadratic SDP relaxation.
In Section \ref{sec:valid_ineq}, we introduce new valid inequalities on the lifted matrix variable of the convex quadratic model, and we describe how cutting planes are obtained by the solution of separation problems. In Section \ref{SectionFeaSolQAP}, we present the heuristic used to generate lower bounds to the QKP. 
In Section \ref{sect:numerics}, we present our algorithm \verb;CWICS;, and discuss our numerical experiments, and in Section \ref{sec:conclusion}, we present our final remarks.

\subsubsection*{Notation}If $A\in\mathbb{S}^n$, then $\svec(A)$ is a vector  whose entries come from $A$ by stacking up its `lower half', i.e., 
\[
\svec(A):=(a_{11},  \ldots,a_{n1},a_{22},  \ldots, a_{n2},  \ldots,a_{nn})^T\in \mathbb{R}^{n(n+1)/2}~.
\]

The operator $\sMat$ is the inverse of $\svec$, i.e., $\sMat(\svec(A))=A$. 

We also denote by $\lambda_{\min}(A)$, the smallest eigenvalue of $A$ and by $\lambda_i(A)$ the $i^{th}$ largest eigenvalue of $A$. 

To facilitate the reading of the paper, Table \ref{tab_acronyms} relates the acronyms used with the associated equations
numbers.

\begin{table}[ht]
\centering 
\begin{tabular}{cc|cc}
 \hline
\hyperlink{modelqkp}{\QKP}&\eqref{eq:qkp} &\hyperlink{ciineq}{\CI}&\eqref{ci}\\
\hyperlink{modelqkplifted}{\QKP$_{\mbox{lifted}}$} &\eqref{eq:qkplifted}&\hyperlink{eciineq}{\ECI}&\eqref{eci}\\
\hyperlink{modellinsdp}{\LPR}&\eqref{linsdp}&\hyperlink{lciineq}{\LCI}&\eqref{ineq_lci}\\
 \hyperlink{modelcqppert}{\CQP$_{Q_p}$}&\eqref{eq:qkppert}&\hyperlink{sciineq}{\SCI}&\eqref{cut1}\\
\hyperlink{modellsdp}{\LSDP}&\eqref{eq:lsdp} &\hyperlink{cilsineq}{\CILS}&\eqref{cut:gclique2}\\
\hyperlink{modelrelqkplifted}{\QSDP$_{Q_p}$}&\eqref{eq:relqkplifted} &\hyperlink{scilsineq}{\SCILS}&\eqref{cut2}\\
&&\hyperlink{skilsineq}{\SKILS}&\eqref{eq:skils}\\
\hline
\end{tabular}
\caption{Equations number corresponding to acronyms}
\label{tab_acronyms}
\end{table}
We also show the standard abbreviations used in the paper in Table \ref{tab:abb}.
\begin{table}[ht]
\centering 
\begin{tabular}{c|c}
 \hline
CQP&Convex Quadratic Programming\\
QKP&Quadratic Knapsack Problem\\
SDP&Semidefinte Programming\\
MIQP&Mixed Integer Quadratic Programming\\
MILP&Mixed Integer Linear Programming\\
\hline
\end{tabular}
\caption{List of abbreviations}
\label{tab:abb}
\end{table}


\section{A Parametric Convex Quadratic Relaxation}
\label{sect:convexification}
In order to construct a convex relaxation for \hyperlink{modelqkp}{\QKPp}, we start by considering the following  standard reformulation of the problem in the lifted space of symmetric matrices, defined by the lifting $X:=xx^T$.
\hypertarget{modelqkplifted}{
\begin{equation}
\label{eq:qkplifted}
\text{(\QKP$_{\mbox{lifted}}$)} \qquad  \qquad
\begin{array}{rrl}
    p^*_{\textsc{\tiny QKP$_{\mbox{lifted}}$}}:= &\max &  \trace (Q X)\\
	&\text{s.t.} & w^T x \leq c \\
&&X=xx^T\\
	&& x\in \{0,1\}^n.
\end{array}
\end{equation}
}

We consider an initial LP relaxation of \hyperlink{modelqkp}{\QKPp}, given by 
\hypertarget{modellinsdp}{
\begin{equation}
\label{linsdp}
(\LPR) \qquad  \qquad \begin{array}{lll}
   \max & \trace (QX)\cr
 \mbox{s.t.}  & (x,X) \in \mathcal{P},
\end{array}
\end{equation}
}
where $\mathcal{P}\subset [0,1]^n \times \mathbb{S}^n $ is a bounded polyhedron, such that 
\[\{ (x,X) \; : \;  w^T x \leq c, \; X=xx^T, \; x\in \{0,1\}^n\} \subset \mathcal{P}.\]

\subsection{The perturbation of the quadratic objective}

Next, we  propose a convex quadratic relaxation with the same feasible set as \hyperlink{modellinsdp}{\LPR} , but maintaining a concave perturbed version of the quadratic objective function of  \hyperlink{modelqkp}{\QKPp}, and linearizing only the remaining nonconcave part derived from the perturbation. 
More specifically, we choose $Q_p \in \Sn$ such that
\begin{equation}
\label{eq:QQnQp}
Q-Q_p \preceq 0,
\end{equation}
and we get
\begin{align*}
x^TQx&=  x^T(Q-Q_p)x +x^TQ_p x =
 x^T(Q-Q_p)x +\trace (Q_p xx^T)\\
  &=x^T(Q-Q_p)x +\trace (Q_p X).
\end{align*}
Finally, we define the parametric convex quadratic relaxation of \hyperlink{modelqkp}{\QKP}:
\hypertarget{modelcqppert}{
\begin{equation}
\label{eq:qkppert}
(\CQP_{Q_p}) \qquad  \qquad
\begin{array}{rll}
	\textdef{$\pCQP(Q_p)$} := \max & x^T (Q-Q_p)x +\trace (Q_pX)\\
\mbox{s.t.}  & (x,X) \in \mathcal{P}.
\end{array}
\end{equation}
}

\section{Optimizing the parametric problem over the parameter $Q_p$}
\label{sec:OptParametric}
The upper bound $\pCQP(Q_p)$ in the convex quadratic problem $\hyperlink{modelcqppert}{\CQP_{Q_p}}$
depends on the feasible perturbation $Q_p$ of the Hessian $Q$.
To find the best upper bound, we consider the parametric problem
\begin{equation}
	\label{eq:paramQQQ}
\param := \min_{Q-Q_p\preceq 0} \, \pCQP(Q_p).
\end{equation}
We solve \eqref{eq:paramQQQ} with a primal-dual interior-point method (IPM), and we describe in this section how the search direction of the algorithm is obtained at each iteration. 

We start
with minimizing a log-barrier function.  We use the
barrier function, $B_\mu (Q_p,Z)$
with barrier parameter, $\mu>0$, to obtain the barrier problem
\index{$\mu>0$, barrier parameter}
\index{$B_\mu (Q_p,Z)$, barrier function}
\begin{equation}
\label{eq:barprobl}
\begin{array}{rll}
\min  & B_\mu (Q_p,Z) := \pCQP(Q_p) -\mu \log \det Z
\\ \text{s.t.} &  Q-Q_p + Z = 0 & (:\Lambda)
\\             &  Z \succ 0,    &
\end{array}
\end{equation}
where $Z\in \Sn$ and $\Lambda\in \Sn$ denote, respectively,  the slack and the dual symmetric matrix variables. We consider the 
Lagrangian function
\[
L_\mu (Q_p,Z,\Lambda) := \pCQP(Q_p) -\mu \log \det Z
 + \trace(( Q-Q_p + Z)\Lambda).
\]

Some important points should be emphasized here. We first note that the objective function for $\pCQP(Q_p)$ is linear  in $Q_p$,
i.e.,~this function is the maximum of linear functions over feasible
points $x,X$. Therefore, this is a convex function.

Moreover, as will be detailed next, the search direction of the IPM, computed at each iteration of the algorithm, depends on the optimum solution  $x=x(Q_p),
X=X(Q_p)$ of $\hyperlink{modelcqppert}{\CQP_{Q_p}}$, for a fixed matrix $Q_p$. At each iteration of the IPM, we have $Z \succ 0$, and therefore $Q-Q_p\prec 0$. Thus, problem  $\hyperlink{modelcqppert}{\CQP_{Q_p}}$ maximizes a strictly concave quadratic function, subject to linear constraints over a compact set $\mathcal{P}$, and consequently, has a unique optimal solution (see e.g. \cite{doi:10.1002/wics.1344}).
From standard sensitivity analysis results,
e.g. ~\cite[Corollary 3.4.2]{Fia:83},\cite{Hog:73b}, \cite[Theorem 1]{doi:10.1137/0114053},
 as the optimal solution
 $x=x(Q_p),
X=X(Q_p)$ is unique, the function $\pCQP(Q_p)$ is differentiable and the gradient is obtained by differentiating the
Lagrangian function.

Since $Q_p$ appears only in the
objective function in $\hyperlink{modelcqppert}{\CQP_{Q_p}}$, and
\[
x^T (Q-Q_p)x +\trace (Q_pX)=
  x^TQx +   \trace (Q_p(X-xx^T)),
\]
we get a directional derivative at $Q_p$ in the direction
$\Delta Q_p$,
\[
D (\pCQP(Q_p);\Delta Q_p) =
        \max_{\text{optimal }x,X}\trace((X-xx^T )\Delta Q_p).
\]
Since we have a unique optimum $x=x(Q_p),
X=X(Q_p)$, we get the gradient
\begin{equation}
\label{grad}
\nabla \pCQP(Q_p) =X -xx^T.
\end{equation}
The gradient of the barrier function,  is then
\[
\nabla B_\mu (Q_p) =  (X-xx^T)  -\mu Z^{-1}.
\]

The optimality conditions for \eqref{eq:barprobl}
are obtained by differentiating the Lagrangian $L_\mu$
with respect to $Q_p,\Lambda,Z$, respectively,
\begin{equation}
\label{eq:logbaroptcond2}
\begin{array}{lrcll}
 \frac {\partial L_\mu}{\partial Q_p}: &
        \nabla \pCQP(Q_p)
             -\Lambda
                &=&0,
 \\ \frac {\partial L_\mu}{\partial \Lambda}: &
        Q-Q_p+Z
                &=&0,
 \\ \frac {\partial L_\mu}{\partial Z}: &
	-\mu Z^{-1} +\Lambda
                &=&0 , &
	(\text{or}) \,\, Z\Lambda-\mu I =0.
\end{array}
\end{equation}
This gives rise to the nonlinear  system
\begin{equation}
\label{eq:logbarnonlin}
\textdef{$G_\mu(Q_p,\Lambda,Z)$}=
\begin{pmatrix}
        \nabla \pCQP(Q_p) -\Lambda  \cr
        Q-Q_p+Z   \cr
	 Z\Lambda-\mu I
\end{pmatrix}=0, \qquad Z,\Lambda \succ 0.
\end{equation}

We use a BFGS approximation for the Hessian of $\pCQP$, since it is not guaranteed to be twice differentiable everywhere, and update it at each iteration (see \cite{Lewis2013}). We denote the 
approximation of \textdef{$\hessp$} by $B$, and begin with the approximation $B_0=I$. 
Recall that if
$Q_p^k,Q_p^{k+1}$ are two successive iterates with gradients
$\nabla \pCQP(Q^k_p),\nabla \pCQP(Q^{k+1}_p)$, respectively, with current
Hessian approximation $B_k\in \Ss^{n(n+1)/2}$, then we set
\[
Y_k:= \nabla \pCQP(Q^{k+1}_p)-\nabla \pCQP(Q^k_p), \qquad
S_k:= Q_p^{k+1}-Q_p^k,
\]
and,
\[
\upsilon:=\langle
Y_k,S_k\rangle, \qquad 
\omega:= \langle \svec(S_k), B_k \svec(S_k)\rangle.
\]
Finally, we update the Hessian approximation with
\[
B_{k+1}:= B_k+ \frac 1\upsilon \left(\svec(Y_{k})\svec(Y_{k}^T)\right) - \frac 1\omega\left(
       B_k \svec(S_{k})\svec(S_{k})^T B_k\right).
\]
We note that the curvature condition $\upsilon >0$ should be verified to guarantee the positive definiteness of the updated Hessian. In our implementation, we address this by skipping the BFGS update when $\upsilon$ is negative or too close to zero.

The  equation for the search direction is 
\begin{equation}
\label{eq:logbarnonlinJac}
\textdef{$G^\prime_\mu(Q_p,\Lambda,Z)$}
\begin{pmatrix}
        \Delta Q_p\cr
        \Delta \Lambda\cr
        \Delta Z
\end{pmatrix}
= -G_\mu(Q_p,\Lambda,Z),
\end{equation}
where 
\begin{equation}
\label{residuals}
G_\mu(Q_p,\Lambda,Z)=
\begin{pmatrix}
        \nabla \pCQP(Q_p) -\Lambda  \cr
        Q-Q_p+Z   \cr
	 Z\Lambda-\mu I
\end{pmatrix}=:
\begin{pmatrix}
        R_d\cr
        R_p\cr
        R_c
\end{pmatrix}.
\end{equation}
If $B$ is the current estimate of the Hessian, then the system becomes
\[
\left\{
\begin{array}{l}
\sMat( B \svec(\Delta Q_p) ) -\Delta \Lambda=-R_d, \\
- \Delta Q_p + \Delta Z=-R_p, \\
Z\Delta \Lambda + \Delta Z  \Lambda  = - R_c.
\end{array}\right.
\]
We can substitute for the  variables $\Delta \Lambda$ and $ \Delta Z$   in the third equation of the system. 
The elimination gives us a simplified system, and therefore, we apply it, using the following two equations for elimination and backsolving,
\begin{equation}
\label{eq:twoeqnelim}
\begin{array}{c}
\Delta \Lambda=\sMat( B \svec(\Delta Q_p) ) +R_d, \quad
\Delta Z=-R_p +\Delta Q_p.\\
\end{array}
\end{equation}
Accordingly, we have a single  equation to solve, and the system finally becomes
\[
Z\sMat( B \svec(\Delta Q_p) )
+(\Delta Q_p)\Lambda=-R_c-Z R_d+R_p\Lambda.
\]

We emphasize that  to compute the search direction at each iteration of our IPM, we need to update the residuals defined in \eqref{residuals}, and therefore we need the optimal solution 
$x=x(Q_p), X=X(Q_p)$ of the convex quadratic relaxation $\hyperlink{modelcqppert}{\CQP_{Q_p}}$ for the current perturbation $Q_p$. Problem $\hyperlink{modelcqppert}{\CQP_{Q_p}}$ is thus solved at each iteration of the IPM method, each time for a new perturbation $Q_p$, such that $Q-Q_p\prec 0$.

In Algorithm \ref{alg:ipm_iteration}, we present in details an iteration of the IPM. The algorithm is part of the complete framework used to generate bounds for $\hyperlink{modelqkp}{\QKPp}$, as described in Section \ref{sect:numerics}.

\begin{algorithm}
\small{
\caption{Updating the perturbation $Q_p$}
\label{alg:ipm_iteration}
\begin{quote}
Input:  $k$, $Q^{k}_p$, $Z^{k}$,  $\Lambda^{k}$, $x(Q^{k}_p)$, $X(Q^{k}_p)$, $\nabla \pCQP(Q^{k}_p)$, $B_{k}$, $\mu^k$, $\tau_{\alpha}:=0.95$, $\tau_{\mu}:=0.9$.\\
Compute the residuals:
\[
\begin{pmatrix}
        R_d\cr
        R_p\cr
        R_c
\end{pmatrix}:=
\begin{pmatrix}
        \nabla \pCQP(Q^{k}_p) -\Lambda^{k}   \cr
        Q-Q^{k}_p+Z^{k}   \cr
	 Z^{k}\Lambda^{k}-\mu^k I
\end{pmatrix}.
\]
\\
Solve the linear system for $\Delta Q_p$:
\[
Z^{k}\sMat( B_{k} \svec(\Delta Q_p) )
+(\Delta Q_p)\Lambda^{k}=-R_c-Z^{k}R_d+R_p\Lambda^{k}.
\]\\
Set:
\[
\Delta \Lambda:=\sMat( B_{k} \svec(\Delta Q_p) )+R_d,  \ \Delta Z:=-R_p +\Delta Q_p.
\]
\\
Update $Q_p$, $Z$ and $\Lambda$:
\[
Q^{k+1}_p := Q^{k}_p + \hat{\alpha}_p \Delta Q_p,\ Z^{k+1} := Z^{k}_p + \hat{\alpha}_p\Delta Z,\ \Lambda^{k+1} := \Lambda^{k} + \hat{\alpha}_d \Delta \Lambda,
\]
where 
\begin{align*}
&\textstyle \hat{\alpha}_p:=\tau_{\alpha}\times\min\{1,\mbox{argmax}_{\alpha_p}\{ Z^{k}_p + \alpha_p\Delta Z \succeq 0\}\},\\ 
&\textstyle \hat{\alpha}_d:=\tau_{\alpha}\times\min\{1,\mbox{argmax}_{\alpha_d}\{ \Lambda^{k} + \alpha_d \Delta \Lambda \succeq 0\}\}.
\end{align*}
\\
Obtain the optimal solution $x(Q^{k+1}_p)$, $X(Q^{k+1}_p)$ of relaxation $\hyperlink{modelcqppert}{\CQP_{Q_p}}$, where $Q_p:=Q^{k+1}_p$.
\\
Update the gradient of $\pCQP$:
\[
\nabla \pCQP(Q^{k+1}_p) := X(Q^{k+1}_p) - x(Q^{k+1}_p)x(Q^{k+1}_p)^T.
\]
\\
Update the Hessian approximation of $\pCQP$ (if $\upsilon>0$):
\begin{align*}
&Y_{k}:= \nabla \pCQP(Q^{k+1}_p)-\nabla \pCQP(Q^k_p), \ S_{k}:= Q_p^{k+1}-Q_p^k,\\
& \upsilon:=\langle
Y_{k},S_{k}\rangle, \ \omega:= \langle \svec(S_{k}), B_{k} \svec(S_{k})\rangle,\\
&B_{k+1}:= B_k+ \frac 1\upsilon \left(\svec(Y_{k})\svec(Y_{k}^T)\right) - \frac 1\omega\left(
       B_k \svec(S_{k})\svec(S_{k})^T B_k\right).
\end{align*}
\\
Update $\mu$:
\[
\mu^{k+1}:=  \tau_{\mu} \, \frac{\trace (Z^{k+1}\Lambda^{k+1})}{n}.
\]
\\
Output: $Q^{k+1}_p$, $Z^{k+1}$,  $\Lambda^{k+1}$, $x(Q^{k+1}_p)$, $X(Q^{k+1}_p)$, 
$\nabla \pCQP(Q^{k+1}_p)$, $B_{k+1}$, $\mu^{k+1}$.
\end{quote}
}
\end{algorithm} 

\remark{Algorithm is an interior-point method with a quasi-Newton step (BFGS). The object function we are minimizing is differentiable with exception possibly at the optimum. A complete convergence analysis of the algorithm is not in the scope of this paper, however, convergence analysis for some similar problems can be found in the literature. In \cite{ArmandGilbert2000}, it is shown that if the objective function is always differentiable and strongly convex, then it is globally convergent to the analytic center of the primal-dual optimal set when $\mu$ tends to zero and strict complementarity holds. }

\section{The parametric quadratic SDP relaxation}
\label{sec:sdprel}
In an attempt to obtain tighter bounds, a promising approach might seem to be to include the positive semidefinite constraint $X-xx^T\succeq 0$ in our parametric quadratic relaxation $\hyperlink{modelcqppert}{\CQP_{Q_p}}$, and solve a parametric convex quadratic SDP relaxation, also using an IPM.
Nevertheless, we show in this section that the convex quadratic SDP relaxation cannot generate a better bound than the linear SDP relaxation, obtained when we set $Q_p$ equal to $Q$. In fact, as shown below, the result applies not only to the QKP, but to more general problems as well. We emphasize here that the same result does not apply for  $\hyperlink{modelcqppert}{\CQP_{Q_p}}$. We could observe with our computational experiments that the best bounds were obtained by $\hyperlink{modelcqppert}{\CQP_{Q_p}}$, when we had $Q-Q_p\neq 0$, for all instances considered.

Consider the linear SDP problem given by  
\hypertarget{modellsdp}{
\begin{equation}
\label{eq:lsdp}
\text{(\LSDP)} \qquad  \qquad
\begin{array}{rrl}
	 p^*_{\textsc{\tiny LSDP}}:= &\sup &  \trace (Q X)\\
	&\text{s.t.} & (x,X)\in \mathcal{F} \\
&&X-xx^T\succeq 0,
\end{array}
\end{equation}
}
where $x\in \mathbb{R}^n$, $X\in \mathbb{S}^n$, and   $\mathcal{F}$ is any subset of $\mathbb{R}^n \times \mathbb{S}^n$. 

We now consider the parametric  SDP problem  given by
\hypertarget{modelrelqkplifted}{
\begin{equation}
\label{eq:relqkplifted}
\text{(\QSDP$_{Q_p}$)} \qquad  \qquad
\begin{array}{rrl}
   	 p^*_{\textsc{\tiny QSDP$_{Q_p}$}} := &\sup &   x^T (Q-Q_p) x +\trace (Q_p X)\\
	&\text{s.t.} & (x,X)\in \mathcal{F} \\
&&X-xx^T\succeq 0,
\end{array}
\end{equation}
} where
$Q-Q_p\preceq 0$.

\begin{theorem}\label{uselessthm}
Let $\mathcal{F}$ be any subset of $\mathbb{R}^n \times \mathbb{S}^n$. 
For any choice of matrix $Q_p$ satisfying $Q-Q_p\preceq 0$, we have \begin{equation}\label{comp}  p^*_{\textsc{\tiny QSDP$_{Q_p}$}} \geq  p^*_{\textsc{\tiny LSDP}}.\end{equation}  Moreover, $\inf\{  p^*_{\textsc{\tiny QSDP$_{Q_p}$}}  ~:~ Q-Q_p\preceq 0 \} = p^*_{\textsc{\tiny LSDP}}$.
\end{theorem}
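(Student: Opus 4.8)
The plan is to compare \hyperlink{modelrelqkplifted}{\QSDP$_{Q_p}$} and \hyperlink{modellsdp}{\LSDP} pointwise on their common feasible set $\{(x,X): (x,X)\in\mathcal{F},\ X-xx^T\succeq 0\}$, and then pass to the suprema. First I would rewrite the \hyperlink{modelrelqkplifted}{\QSDP$_{Q_p}$} objective using the identities $x^TQ_px=\trace(Q_pxx^T)$ and $x^TQx=\trace(Qxx^T)$, which gives
\[
x^T(Q-Q_p)x+\trace(Q_pX)=\trace(QX)+\trace\!\big((Q_p-Q)(X-xx^T)\big),
\]
so that at any common feasible point the two objectives differ only by the correction term $\trace\!\big((Q_p-Q)(X-xx^T)\big)$.

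Next I would observe that on the common feasible set both factors of the correction term are positive semidefinite: $Q_p-Q\succeq 0$ by the standing hypothesis \eqref{eq:QQnQp}, and $X-xx^T\succeq 0$ by feasibility. Since the trace of a product of two positive semidefinite matrices is nonnegative (for $A,B\succeq 0$ one has $\trace(AB)=\trace(A^{1/2}BA^{1/2})\geq 0$), the correction term is nonnegative; hence the \hyperlink{modelrelqkplifted}{\QSDP$_{Q_p}$} objective value dominates the \hyperlink{modellsdp}{\LSDP} objective value at every common feasible point. Because the two problems have exactly the same feasible set, taking the supremum over that set on both sides yields \eqref{comp}.

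For the infimum assertion, \eqref{comp} immediately gives $\inf\{p^*_{\textsc{\tiny QSDP$_{Q_p}$}}: Q-Q_p\preceq 0\}\geq p^*_{\textsc{\tiny LSDP}}$; conversely, $Q_p=Q$ is feasible since $Q-Q_p=0\preceq 0$, and for this choice the correction term vanishes identically, so $p^*_{\textsc{\tiny QSDP$_{Q_p}$}}=p^*_{\textsc{\tiny LSDP}}$ and the infimum is attained. I do not expect a genuine obstacle here; the only point demanding care is that $\mathcal{F}$ is an arbitrary subset of $\mathbb{R}^n\times\mathbb{S}^n$ — no convexity, closedness, or boundedness is assumed — so the whole argument must stay at the level of a pointwise inequality between the two objective functions over one fixed feasible set, with no appeal to duality or to attainment of the suprema, and the symbol ``$\sup$'' must be retained throughout rather than replaced by ``$\max$''.
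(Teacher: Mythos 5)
Your proof is correct and follows essentially the same route as the paper: the identity $x^T(Q-Q_p)x+\trace(Q_pX)=\trace(QX)+\trace\bigl((Q_p-Q)(X-xx^T)\bigr)$ is exactly the decomposition used there, the nonnegativity of the trace of a product of two positive semidefinite matrices is the same key step (the paper phrases it with the two negative semidefinite factors $Q-Q_p$ and $\tilde{x}\tilde{x}^T-\tilde{X}$), and the choice $Q_p=Q$ settles the infimum identically. The only cosmetic difference is that you compare objectives pointwise over the common feasible set before taking suprema, whereas the paper bounds $p^*_{\textsc{\tiny QSDP$_{Q_p}$}}$ below by the \LSDP objective at each feasible point; these are the same argument.
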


\begin{proof}
Let    $(\tilde{x},\tilde{X})$ be a feasible solution for \hyperlink{modellsdp}{\LSDP}.
We have
 \begin{align}
   p^*_{\textsc{\tiny QSDP$_{Q_p}$}}  &\geq   \tilde{x}^T (Q-Q_p)\tilde{x} +\trace (Q_p \tilde{X})  \label{eq:optkkta1}\\
  & =  \trace((Q-Q_p)(\tilde{x}\tilde{x}^T-\tilde{X})) + \trace((Q-Q_p)\tilde{X}) \nonumber \\
  & + \trace (Q_p\tilde{X}) \label{eq:optkkta2} \\
  & =  \trace((Q-Q_p)(\tilde{x}\tilde{x}^T-\tilde{X})) + \trace(Q\tilde{X}) \label{eq:optkkta3} \\
  & \geq  \trace(Q\tilde{X}). \label{eq:optkkta4}
\end{align}
The inequality \eqref{eq:optkkta1} holds because  $(\tilde{x},\tilde{X})$ is also a feasible solution for \hyperlink{modelrelqkplifted}{\QSDP$_{Q_p}$}. The inequality in \eqref{eq:optkkta4} holds because of the negative semidefiniteness of $Q-Q_p$ and $\tilde{x}\tilde{x}^T-\tilde{X}$. Because $ p^*_{\textsc{\tiny QSDP$_{Q_p}$}}$ is an upper bound on the objective value of \hyperlink{modellsdp}{\LSDP} at any feasible solution, we can conclude that $p^*_{\textsc{\tiny QSDP$_{Q_p}$}} \geq  p^*_{\textsc{\tiny LSDP}}$. Clearly, 
$Q_p=Q$  satisfies $Q-Q_p=0\preceq 0$ and \hyperlink{modellsdp}{\LSDP}  is the same as \hyperlink{modelrelqkplifted}{\QSDP$_{Q_p}$} for this choice of $Q_p$. Therefore, 
 $\inf\{  p^*_{\textsc{\tiny QSDP$_{Q_p}$}}  ~:~ Q-Q_p\preceq 0 \} = p^*_{\textsc{\tiny LSDP}}$.
\end{proof}

Notice that in Theorem \ref{uselessthm} we do not require that $\mathcal{F}$   be convex nor  bounded. 
Also, in principle, for some choices of $Q_p$, 
we could have $p^*_{\textsc{\tiny QSDP$_{Q_p}$}}=+\infty$   with   
$ p^*_{\textsc{\tiny LSDP}}=+\infty$  or not. 

 \begin{remark}
As a corollary from Theorem  \ref{uselessthm}, we have  that  the upper bound for $\hyperlink{modelqkp}{\QKP}$, given by the solution of the quadratic relaxation $\hyperlink{modelcqppert}{\CQP_{Q_p}}$,  cannot be smaller than the upper bound given by the  solution of the SDP relaxation obtained from it, by adding the SDP constraint  $X-xx^T\succeq 0$ and setting $Q_p$ equal to $Q$.

%
%

\end{remark}

\section{Valid inequalities}
\label{sec:valid_ineq}
We are now interested in finding valid inequalities to strengthen relaxations of $\hyperlink{modelqkp}{\QKP}$ in the lifted space determined by the lifting $X:=xx^T$. Let us denote by \ConvRelp, any convex relaxation of $\hyperlink{modelqkp}{\QKP}$ in the lifted space, where the equation $X=xx^T$ was relaxed in some manner, by convex constraints, i.e., any convex relaxation of \hyperlink{modelqkplifted}{\QKP$_{\mbox{lifted}}$}  

We  note that if the  inequality
\begin{equation}\label{vineq}
\tau^Tx\leq \beta
\end{equation}
is valid for $\hyperlink{modelqkp}{\QKPp}$, where $\tau \in \Z_+^n$ and $\beta\in \Z_+$, then, as $x$ is nonnegative and $X:=xx^T$,

\begin{equation}\label{ineqY}
\left(x \; X\right) \begin{pmatrix} -\beta \cr
\tau
\end{pmatrix} \leq 0
\end{equation}
is a valid inequality for  \hyperlink{modelqkplifted}{\QKP$_{\mbox{lifted}}$}.  In this case, we say that \eqref{ineqY} is a valid inequality for
 \hyperlink{modelqkplifted}{\QKP$_{\mbox{lifted}}$} derived from the valid inequality \eqref{vineq} for $\hyperlink{modelqkp}{\QKPp}$.

\subsection{Preliminaries: knapsack polytope and cover inequalities}

We begin by recall the concepts of knapsack polytopes and cover inequalities.

The  knapsack polytope is the convex hull of the feasible points of the knapsack problem, 
\[KF :=  \{x \in \{0,1\}^n : w^T x \leq c \}.\]
\index{\KPolp, knapsack polytope}
\index{knapsack polytope, \KPolp}
\hypertarget{defkpol}{
\begin{definition}[zero-one knapsack polytope]
\[
\KPolp:= \conv(KF) = \conv( \{x \in \{0,1\}^n : w^T x \leq c \}).
\]
\end{definition}
}
\begin{proposition}
The dimension
\[
\dim (\hyperlink{defkpol}{\KPol}) = n,
\]
and \hyperlink{defkpol}{\KPol} is an independence system, i.e.,
\[
x \in \hyperlink{defkpol}{\KPolp}, y\in \{0,1\}^n, y\leq x \implies y\in \hyperlink{defkpol}{\KPolp}.
\]
\end{proposition}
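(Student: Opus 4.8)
The plan is to prove the two assertions separately, both leaning on the standing hypothesis $c \geq w_i$ for every $i \in N$.

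For the dimension, I would exhibit $n+1$ affinely independent points lying in $KF \subseteq \KPolp$. The origin is feasible, since $w^T 0 = 0 \leq c$, and for each $i \in N$ the unit vector $e_i$ is feasible, since $w^T e_i = w_i \leq c$ by hypothesis. The displacement vectors $e_1 - 0, \ldots, e_n - 0$ form the standard basis of $\R^n$, hence are linearly independent, so $\{0, e_1, \ldots, e_n\}$ is an affinely independent subset of $\KPolp$. This gives $\dim(\KPolp) \geq n$, and since $\KPolp \subseteq \R^n$ forces $\dim(\KPolp) \leq n$, equality follows.

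For the independence-system property, take $x \in \KPolp$ and $y \in \{0,1\}^n$ with $y \leq x$ coordinatewise. Write $x = \sum_k \lambda_k x^{(k)}$ as a convex combination of points $x^{(k)} \in KF$ with all $\lambda_k > 0$. Because $KF \subseteq [0,1]^n$ and $[0,1]^n$ is convex, each coordinate satisfies $x_i \in [0,1]$; hence $y_i = 1$ forces $x_i = 1$, and then $\sum_k \lambda_k x^{(k)}_i = 1$ with $x^{(k)}_i \in \{0,1\}$ and $\lambda_k > 0$ forces $x^{(k)}_i = 1$ for every $k$. When $y_i = 0$ the bound $x^{(k)}_i \geq 0 = y_i$ is automatic. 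Thus $y \leq x^{(k)}$ for every $k$. Fixing any one such $k$, nonnegativity of $w$ gives $w^T y \leq w^T x^{(k)} \leq c$, and since $y \in \{0,1\}^n$ we conclude $y \in KF \subseteq \KPolp$.

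I do not expect a genuine obstacle. The one point requiring care is recognizing that $x \in \KPolp$ only supplies a convex combination of $0/1$ feasible vectors, not that $x$ itself is integral, so the coordinatewise domination must be transferred to an individual vertex $x^{(k)}$ before the nonnegativity of $w$ is invoked. The hypothesis $c \geq w_i$ is precisely what makes the polytope full-dimensional; without it, any offending index would confine $\KPolp$ to the coordinate hyperplane $\{x_i = 0\}$.
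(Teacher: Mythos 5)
Your proof is correct and the dimension argument is exactly the paper's: the paper likewise observes that $0$ and the unit vectors $e_i$ are feasible (using $w_i\leq c$) and affinely independent, and it dismisses the independence-system claim as ``clear,'' so your careful reduction of $y\leq x$ to domination by a vertex $x^{(k)}$ simply supplies the details the paper omits. (A marginally shorter route for that part: since $w>0$ and $y\leq x$, one has $w^Ty\leq w^Tx\leq c$ directly, because the valid inequality $w^Tz\leq c$ on $KF$ extends to its convex hull.)
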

\begin{proof}
Recall that $w_i\leq c, \forall i$. Therefore, all the unit vectors
$e_i\in\Rn$, as well as the zero vector,  are feasible, and the first statement follows. The second
statement is clear.
\end{proof}

Cover inequalities were originally presented in
\cite{Balas1975,Wolsey1975}; see also
\cite[Section II.2]{NW88}. These inequalities can be used in general
optimization problems with knapsack inequalities and  binary variables and, particularly,   in $\hyperlink{modelqkp}{\QKPp}$.

\hypertarget{ciineq}{\begin{definition}[cover inequality, \CIp]
The subset $C\subseteq N$ is a  cover if it
satisfies
\[
\sum_{j\in C} w_j > c.
\]
The (valid) \CI is
\begin{equation}
\label{ci}
\sum_{j\in C} x_j \leq |C|-1.
\end{equation}
The cover inequality is  minimal if no proper subset of $C$ is
also a cover.
\end{definition}}

\hypertarget{eciineq}{\begin{definition}[extended \CIp, \ECIp]
Let $w^*:=\max_{j\in C} w_j$ and define the extension of $C$ as
\[
E(C):= C\cup \{j \in N\backslash C : w_j \geq w^*\}.
\]
The  \ECI is
\begin{equation}
\label{eci}
\sum_{j\in E(C)} x_j \leq |C|-1.
\end{equation}
\end{definition}}

\hypertarget{lciineq}{\begin{definition}[lifted \CIp, \LCIp]
Given a cover $C$, let  $\alpha_j \geq 0, \forall j \in N\backslash C$, and $\alpha_j > 0$, for some $j \in N\backslash C$,  such that 
\begin{equation}\label{ineq_lci}
\sum_{j\in C} x_j +
\sum_{j\in N\backslash C} \alpha_j x_j \leq |C|-1,
\end{equation}
is a valid inequality  for \hyperlink{defkpol}{\KPol}.
Inequality  \eqref{ineq_lci} is a \LCI.
\end{definition}
}

Cover inequalities are extensively discussed in
\cite{Hammer1975,BalasZemel1978,Balas1975,Wolsey1975,NW88,Atamturk2005}.
Details about the computational complexity of \hyperlink{lciineq}{\LCI} is presented in
\cite{Zemel1989,Gu1998}.
Algorithm \ref{alg:findLCI} \cite[page \pageref{alg:findLCI}]{Wolsey1998}, shows how to derive a facet-defining
\hyperlink{lciineq}{\LCI} from a given minimal cover $C$.

\begin{algorithm}
\small{
\caption{Procedure to find  a Lifted Cover Inequality}
\label{alg:findLCI}
\begin{quote}
Sort the elements in ascending $w_i$ order $i \in N \setminus C$, defining $\{i_1,i_2,  \ldots,i_r\}$. \\
For: {t=1 {\bf to} r}
\begin{quote}
\begin{equation}
\label{KPlci}
\begin{array}{cll}
\zeta_t = &\max & \sum_{j=1}^{t-1} \alpha_{i_j} x_{i_j} + \sum_{i\in C} x_i\\
              &\mbox{s.t.}  & \sum_{j=1}^{t-1} w_{i_j} x_{i_j} + \sum_{i\in C} w_ix_i \leq c -w_{i_t} \\
                     && x \in \{0,1\}^{|C|+t-1}.
\end{array}
\end{equation}

Set $\alpha_{i_t} = |C|-1 -\zeta_t$.
\end{quote}
End
\end{quote}
}
\end{algorithm}

\subsection{Adding cuts to the relaxation}

Given a solution  $(\bar{x},\bar{X})$
of \ConvRelp, our initial goal is to obtain a valid inequality for \hyperlink{modelqkplifted}{\QKP$_{\mbox{lifted}}$} derived from a \hyperlink{ciineq}{\CI} that  is violated by $(\bar{x},\bar{X})$. A \hyperlink{ciineq}{\CI} is  formulated as 
$\alpha^Tx\leq e^T\alpha - 1$, where $\alpha\in \{0,1\}^n$ and $e$ denotes the
vector of ones. We then search for the \hyperlink{ciineq}{\CI} that maximizes the sum of the violations among the inequalities in $\bar{Y}\cut (\alpha)  \leq  0$, where  $\bar{Y}:=\left(\bar{x} \; \bar{X}\right)$ and
\[
\cut (\alpha)=\begin{pmatrix} -e^T\alpha +1 \cr
\alpha
\end{pmatrix}.
\]
To obtain such a \hyperlink{ciineq}{\CI},  we solve the following linear knapsack problem, 
\begin{equation} \label{eq:optextreme}
v^*:=\max_{\alpha}\{ e^T \bar{Y}\cut(\alpha) :
w^T\alpha \geq c+1, \,
 \alpha \in \{0,1\}^n\}.
\end{equation}

Let $\alpha^*$ solve \eqref{eq:optextreme}. If $v^*>0$, then at least one valid inequality in the following set of $n$ scaled cover inequalities, denoted in the following by \SCIp, is violated by $(\bar{x},\bar{X})$.

\hypertarget{sciineq}{
\begin{equation}
\label{cut1}
\left(x\; X\right)\begin{pmatrix} -e^T\alpha^* +1 \cr
\alpha^*
\end{pmatrix} \leq 0.
\end{equation}
}

Based on the following theorem, we note that to strengthen cut \eqref{cut1}, we may apply Algorithm \ref{alg:findLCI} to the \hyperlink{ciineq}{\CI} \;obtained, lifting it to an \hyperlink{lciineq}{\LCI}, and finally add the  valid inequality \eqref{ineqY} derived from the \hyperlink{lciineq}{\LCI} to \ConvRelp.

\begin{theorem}\label{rem_dom1}
The valid inequality \eqref{ineqY} for \hyperlink{modelqkplifted}{\QKP$_{\mbox{lifted}}$}, which is derived from a valid \hyperlink{lciineq}{\LCI}, dominates all inequalities derived from a \hyperlink{ciineq}{\CI} that can be lifted to the \hyperlink{lciineq}{\LCI}. 
\end{theorem}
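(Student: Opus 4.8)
The plan is to compare, term by term, the inequality on the lifted variable derived from a lifted cover inequality (LCI) with the inequality derived from the cover inequality (CI) it came from, and show the former implies the latter on the feasible set of \ConvRelp. First I would set up notation: write the CI as $\sum_{j\in C}x_j\le |C|-1$, i.e.~$\tau^{\textsc{ci}} = \sum_{j\in C}e_j$ and $\beta^{\textsc{ci}} = |C|-1$; and write the LCI as $\sum_{j\in C}x_j + \sum_{j\in N\setminus C}\alpha_j x_j\le |C|-1$, i.e.~$\tau^{\textsc{lci}} = \tau^{\textsc{ci}} + \sum_{j\in N\setminus C}\alpha_j e_j$ with the same right-hand side $\beta^{\textsc{lci}} = |C|-1 = \beta^{\textsc{ci}}$, where $\alpha_j\ge 0$ for all $j\in N\setminus C$. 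By the construction in \eqref{ineqY}, the derived lifted inequalities are, respectively, $\left(x\;X\right)\begin{pmatrix}-\beta^{\textsc{ci}}\\ \tau^{\textsc{ci}}\end{pmatrix}\le 0$ and $\left(x\;X\right)\begin{pmatrix}-\beta^{\textsc{lci}}\\ \tau^{\textsc{lci}}\end{pmatrix}\le 0$, which unpacks to the $n$ row inequalities $-\beta^{\textsc{ci}}x_i + X_i^T\tau^{\textsc{ci}}\le 0$ and $-\beta^{\textsc{lci}}x_i + X_i^T\tau^{\textsc{lci}}\le 0$ for each $i\in N$, where $X_i$ denotes the $i$-th column of $X$.

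Next I would subtract the two row inequalities. Since the right-hand sides agree ($\beta^{\textsc{lci}} = \beta^{\textsc{ci}}$), the $-\beta x_i$ terms cancel, and the difference of the left-hand sides is exactly $X_i^T(\tau^{\textsc{lci}}-\tau^{\textsc{ci}}) = \sum_{j\in N\setminus C}\alpha_j X_{ij}$. The key point is then that on any feasible point of \ConvRelp\ we have $X_{ij}\ge 0$ for all $i,j$: indeed \ConvRelp\ is a relaxation of \hyperlink{modelqkplifted}{\QKP$_{\mbox{lifted}}$} whose feasible set contains $\{(x,xx^T):x\in\{0,1\}^n, w^Tx\le c\}$, and the nonnegativity of the entries of $X$ — coming from the McCormick bound $X_{ij}\ge 0$, or equivalently from the fact that $\mathcal P\subset[0,1]^n\times\mathbb S^n$ together with the McCormick envelope — is part of the description of $\mathcal P$ (this is precisely the nonnegativity used to pass from \eqref{vineq} to \eqref{ineqY} in the first place). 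Combined with $\alpha_j\ge 0$, this gives $\sum_{j\in N\setminus C}\alpha_j X_{ij}\ge 0$ at every feasible point, hence the LCI-derived row inequality's left-hand side is at least the CI-derived row inequality's left-hand side. Therefore, at any feasible point, if the LCI-derived inequality holds (left-hand side $\le 0$) then so does the CI-derived inequality; and wherever the CI-derived inequality is violated, the LCI-derived one is violated by at least as much. This is the sense of ``dominates'' used in the paper, and it holds for every CI that lifts to the given LCI simultaneously, since the argument used nothing about $C$ beyond $\alpha_j\ge 0$.

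I would also add the brief remark, for completeness, that both inequalities are genuinely valid for \hyperlink{modelqkplifted}{\QKP$_{\mbox{lifted}}$}: the CI is valid for $\hyperlink{modelqkp}{\QKPp}$ by definition of a cover, the LCI is valid for \hyperlink{defkpol}{\KPol} by hypothesis (and Algorithm \ref{alg:findLCI} produces such an $\alpha$), and the passage to the lifted space via \eqref{ineqY} preserves validity since $x\ge 0$ and $X=xx^T$. The main obstacle — really the only subtle point — is being careful about exactly which inequalities one is comparing: ``dominates'' here is a statement about the $n$-tuples of scalar row inequalities $\bar Y\,\cut(\cdot)\le 0$ relative to the admissible region of \ConvRelp, not a polyhedral face-containment statement in lifted space, so I would state the claim in that form and make explicit the use of $X_{ij}\ge 0$, which is the one external fact the argument leans on. No delicate estimation is needed beyond that.
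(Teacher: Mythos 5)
Your proposal is correct and follows essentially the same route as the paper's proof: both unpack \eqref{ineqY} into the $n$ row inequalities $\sum_{j\in C}X_{ij}\le(|C|-1)x_i$ versus $\sum_{j\in C}X_{ij}+\sum_{j\in N\setminus C}\alpha_j X_{ij}\le(|C|-1)x_i$ and observe that, since $\alpha_j\ge 0$ and $X_{ij}\ge 0$, the LCI-derived left-hand side exceeds the CI-derived one with the same right-hand side, giving the dominance. Your additional remark pinning down where the nonnegativity of $X_{ij}$ comes from is a fair clarification of a fact the paper simply asserts, but it does not change the argument.
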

\begin{proof}
Consider the \hyperlink{lciineq}{\LCI} \eqref{ineq_lci} derived from a \hyperlink{ciineq}{\CI} \eqref{ci} for $\hyperlink{modelqkp}{\QKPp}$. The corresponding scaled cover inequalities \eqref{ineqY} derived from the \hyperlink{ciineq}{\CI} and the \hyperlink{lciineq}{\LCI} are, respectively, 
\[
\sum_{j\in C} X_{ij} 
 \leq (|C|-1)x_i, \;\; \forall i\in N,
\]
and
\[
\sum_{j\in C} X_{ij} +
\sum_{j\in N\backslash C} \alpha_j X_{ij} \leq (|C|-1)x_i,  \;\; \forall i\in N,
\]
where $\alpha_j \geq 0, \forall j \in N\backslash C$. Clearly, as all $X_{ij}$ are nonnegative, the second inequality  dominates the first, for all $i\in N$.
\end{proof}

\subsection{New valid inequalities in the lifted space}

As  discussed, after finding any valid inequality in the form of \eqref{vineq} for $\hyperlink{modelqkp}{\QKPp}$, we may add  the constraint \eqref{ineqY}
to \ConvRel when aiming at better bounds. We observe now, that besides \eqref{ineqY} we can also generate other valid inequalities in the lifted space by taking advantage of the lifting $X := xx^T$, and also of the fact that $x$ is binary. In the following, we show how the idea can be applied to cover  inequalities.

Let
\begin{equation}\label{valid_ineq}
\sum_{j\in C} x_j \leq \beta,
\end{equation}
where  $C \subset N $ and $\beta<|C|$, be a valid inequality for \hyperlink{defkpol}{\KPol}.

Inequality \eqref{valid_ineq} can be either a cover inequality, \hyperlink{ciineq}{\CI},  an extended cover inequality, \hyperlink{eciineq}{\ECI},  or a particular lifted cover inequality, \hyperlink{lciineq}{\LCI}, where $\alpha_j \in \{0,1\}, \forall j \in N\backslash C$ in \eqref{ineq_lci}. Furthermore, given a general \hyperlink{lciineq}{\LCI}, where $\alpha_j \in \Z_+$, for all $j \in N\backslash C$, a valid inequality of type \eqref{valid_ineq} can be constructed by replacing each $\alpha_j$ with $\min\{\alpha_j,1\}$ in the \hyperlink{lciineq}{\LCI}.

\hypertarget{cilsineq}{
\begin{definition}[Cover inequality in the lifted space, \CILS]
Let  $C\subset N $ and $\beta<|C|$ as in inequality \eqref{valid_ineq}, and also consider here that $\beta>1$. We define 
\begin{equation}\label{cut:gclique2}
\sum_{i ,j \in C, i < j} X_{ij} \leq \binom{\beta}{2}.
\end{equation}
as the \CILS derived from \eqref{valid_ineq}.
\end{definition}}

\begin{theorem}
 If inequality \eqref{valid_ineq} is valid for $\hyperlink{modelqkp}{\QKPp}$, then the \hyperlink{cilsineq}{\CILS} \; \eqref{cut:gclique2} is a valid inequality  for \hyperlink{modelqkplifted}{\QKP$_{\mbox{lifted}}$}.
\end{theorem}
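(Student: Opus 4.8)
The plan is to show that the inequality $\sum_{i,j\in C,\,i<j} X_{ij} \le \binom{\beta}{2}$ holds at every feasible point of $\hyperlink{modelqkplifted}{\QKP_{\mbox{lifted}}}$, and since that problem has integer $x$ with $X = xx^T$, it suffices to verify the inequality for an arbitrary $x\in\{0,1\}^n$ satisfying $w^Tx\le c$, using $X_{ij}=x_ix_j$. First I would fix such an $x$ and set $S:=\{j\in C : x_j=1\}$, so that $\sum_{i,j\in C,\,i<j} X_{ij} = \sum_{i,j\in C,\,i<j} x_ix_j = \binom{|S|}{2}$, because $x_ix_j=1$ exactly when both $i$ and $j$ lie in $S$. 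Since $x$ is feasible for $\hyperlink{modelqkp}{\QKP}$ and inequality \eqref{valid_ineq} is assumed valid for $\hyperlink{defkpol}{\KPol}$ (equivalently for every feasible $x$ of the knapsack), we have $|S| = \sum_{j\in C} x_j \le \beta$. Then $\binom{|S|}{2}\le\binom{\beta}{2}$ because $k\mapsto\binom{k}{2}$ is nondecreasing in $k$ for integer $k\ge 0$ (and here $\beta>1$, $|S|\ge 0$, so both binomials are well-defined with the usual convention $\binom{0}{2}=\binom{1}{2}=0$). Chaining the equality and the two inequalities gives $\sum_{i,j\in C,\,i<j} X_{ij}\le\binom{\beta}{2}$, as required.

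I would state this cleanly: the key identity is $X_{ij}=x_ix_j$ together with the combinatorial fact that for a $0/1$ vector the double sum over an index set $C$ counts the number of pairs inside the support restricted to $C$; the key bound is the monotonicity of the binomial coefficient $\binom{\cdot}{2}$ applied to the validity bound $|S|\le\beta$. No convexity or topology is needed because $\hyperlink{modelqkplifted}{\QKP_{\mbox{lifted}}}$ only has the discrete feasible points $(x,xx^T)$ with $x$ knapsack-feasible; one does not need to argue about the relaxation $\ConvRelp$ here — validity for $\hyperlink{modelqkplifted}{\QKP_{\mbox{lifted}}}$ is exactly validity at those discrete points.

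There is no serious obstacle; the only point that needs a word of care is making explicit why it is enough to check feasible points of $\hyperlink{modelqkplifted}{\QKP_{\mbox{lifted}}}$ rather than of some convex relaxation, and why the hypothesis that \eqref{valid_ineq} is valid "for $\hyperlink{modelqkp}{\QKP}$" (the theorem statement) is the same as validity for $\hyperlink{defkpol}{\KPol}$ (how it is introduced) — both just say $\sum_{j\in C}x_j\le\beta$ for all $x\in KF$. I would also remark that $\beta$ need not be an integer a priori, but since $\sum_{j\in C}x_j$ is a nonnegative integer, validity forces $|S|\le\lfloor\beta\rfloor$, and $\binom{|S|}{2}\le\binom{\lfloor\beta\rfloor}{2}\le\binom{\beta}{2}$; in the cover/extended-cover/0-1-lifted cases $\beta=|C|-1$ is an integer anyway, so this is a non-issue in practice and can be mentioned in one line. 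That is the entire argument.
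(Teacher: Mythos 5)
Your proof is correct and is essentially the same argument as the paper's: the paper also observes that since at most $\beta$ of the variables $x_j$, $j\in C$, can equal one, at most $\binom{\beta}{2}$ of the products $x_ix_j$ (and hence of the entries $X_{ij}=x_ix_j$) can equal one, which gives the bound. Your version merely spells out the counting via the support set $S$ and the monotonicity of $\binom{\cdot}{2}$, plus a harmless remark on non-integer $\beta$, so no further comment is needed.
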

\begin{proof} Considering \eqref{valid_ineq},  we conclude that at most $\binom{\beta}{2}$ products of variables $x_ix_j$, where $i,j\in C$,  can be equal to 1. Therefore, as $X_{ij}:=x_ix_j$, the result follows.
\end{proof}

\begin{remark} When $\beta > 1$, inequality \eqref{valid_ineq} is well known as a  clique cut, widely used to model decision problems, and frequently used as a cut in branch-and-cut algorithms. In this case, using similar idea to what was used to construct the \hyperlink{cilsineq}{\CILSp}, we conclude that it possible to fix
\[
 X_{ij}  = 0, \mbox{ for all } i,j  \in C, i < j.
\]
\end{remark}

Given a solution  $(\bar{x},\bar{X})$   of \ConvRelp, the following MIQP problem is a separation problem, which searches for  a \hyperlink{cilsineq}{\CILS} violated by  $\bar{X}$. 
\hypertarget{miqp1}{
\[
\begin{array}{lll}
z^*:=\, &\max_{\alpha,\beta,K} \trace(\bar{X}K) - \beta(\beta-1),&\multicolumn{1}{r}{(\mbox{\MIQP}_1)}\\
\mbox{s.t.}&w'\alpha \geq c+1,\\
&\beta=e'\alpha -1,\\
&K(i,i)=0,&i=1,  \ldots,n,\\
&K(i,j)\leq \alpha_i, & i,j=1,  \ldots,n, \; i<j,\\
&K(i,j)\leq \alpha_j, & i,j=1,  \ldots,n, \; i<j,\\
&K(i,j)\geq 0, & i,j=1,  \ldots,n, \; i<j,\\
& K(i,j)\geq \alpha_i+\alpha_j-1, & i,j=1,  \ldots,n, \; i<j,\\
&\alpha \in\{0,1\}^n, \; \beta\in \mathbb{R}, \; K\in \mathbb{S}^n.\\
\end{array}
\]
}

If $\alpha^*,\beta^*,K^*$ solves \hyperlink{miqp1}{\MIQP$_1$}, with $z^*>0$, the \hyperlink{cilsineq}{\CILS} given by $\trace(K^*X)\leq \beta^*(\beta^* - 1)$ is violated by $\bar{X}$. The binary vector $\alpha^*$ defines the \hyperlink{ciineq}{\CI} from which the cut is derived. The \hyperlink{ciineq}{\CI} is specifically given by ${\alpha^*}^Tx\leq e^T{\alpha^*}-1$ and $\beta^*(\beta^*-1)$ determines the right-hand side of the \hyperlink{cilsineq}{\CILSp}. The inequality is multiplied by 2 because we consider the variable $K$ as a symmetric matrix, in order to simplify the presentation of the model.

\begin{theorem}\label{rem_dom2}
The valid inequality \hyperlink{cilsineq}{\CILS} for \hyperlink{modelqkplifted}{\QKP$_{\mbox{lifted}}$}, which is de\-ri\-ved from a valid \hyperlink{lciineq}{\LCI} in the form \eqref{valid_ineq}, dominates any \hyperlink{cilsineq}{\CILS} derived from a \hyperlink{ciineq}{\CI} that can be lifted to the \hyperlink{lciineq}{\LCIp}.
\end{theorem}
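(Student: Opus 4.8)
The statement is the \CILS\ analogue of Theorem~\ref{rem_dom1}, and the plan is to prove it by the same mechanism: I reduce the claim to two facts — that the lifting procedure of Algorithm~\ref{alg:findLCI} leaves the right-hand side of the inequality it starts from unchanged, and that a sum of the nonnegative entries $X_{ij}$ is monotone in its index set.

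First I would fix notation. Let $C$ be a minimal cover with $|C|\ge 3$, so that the \CILS\ below is well defined. The \CI\ it induces is $\sum_{j\in C}x_j\le |C|-1$, an inequality of the form \eqref{valid_ineq} with index set $C$ and right-hand side $\beta=|C|-1$; the \CILS\ derived from it, cf.\ \eqref{cut:gclique2}, is $\sum_{i,j\in C,\,i<j}X_{ij}\le\binom{|C|-1}{2}$. Running Algorithm~\ref{alg:findLCI} on $C$ produces a \LCI\ of the form \eqref{ineq_lci}; replacing each coefficient $\alpha_j$ by $\min\{\alpha_j,1\}$ to bring it to the form \eqref{valid_ineq} (as explained right after \eqref{valid_ineq}), its support becomes a set $\widetilde{C}\supseteq C$ and — this is the point to check — its right-hand side is still $\beta=|C|-1$. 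Hence the \CILS\ derived from this \LCI\ reads $\sum_{i,j\in\widetilde{C},\,i<j}X_{ij}\le\binom{|C|-1}{2}$, with exactly the same right-hand side as the one coming from $C$.

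The domination now follows from $\widetilde{C}\supseteq C$ together with $X_{ij}\ge 0$, which holds on any relaxation \ConvRelp\ and, in particular, on \hyperlink{modelqkplifted}{\QKP$_{\mbox{lifted}}$}, where $X=xx^T$ with $x\ge 0$: for every such $(x,X)$,
\[
\sum_{i,j\in C,\,i<j}X_{ij}\;\le\;\sum_{i,j\in\widetilde{C},\,i<j}X_{ij}\;\le\;\binom{|C|-1}{2},
\]
so any point satisfying the \CILS\ derived from the \LCI\ also satisfies the one derived from $C$. Any \CI\ that can be lifted to the given \LCI\ comes from a minimal cover $C'$ with $C'\subseteq\widetilde{C}$ and right-hand side $|C'|-1$ equal to that of the \LCI, i.e.\ $|C'|=|C|$; hence the displayed chain, with $C$ replaced by $C'$, applies to each of them, which proves the theorem.

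The only nontrivial step — and the one I would flag as the main obstacle — is verifying that the right-hand side $|C|-1$ survives the lifting: in each step \eqref{KPlci} of Algorithm~\ref{alg:findLCI} the coefficient $\alpha_{i_t}:=|C|-1-\zeta_t$ is chosen precisely so that the augmented inequality stays valid for \hyperlink{defkpol}{\KPol} with the \emph{same} right-hand side, and the final truncation to the form \eqref{valid_ineq} changes neither $\beta$ nor the membership of the indices of $C$ in the support. Everything else is the elementary set-monotonicity already exploited in the proof of Theorem~\ref{rem_dom1}.
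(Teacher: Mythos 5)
Your proposal is correct and follows essentially the same route as the paper's (very terse) proof: since the LCI in the form \eqref{valid_ineq} has the same right-hand side $|C|-1$ as any \CI that lifts to it but a superset of indices in its support, nonnegativity of $X$ makes the \hyperlink{cilsineq}{\CILS} from the \hyperlink{lciineq}{\LCI} dominate the one from the \hyperlink{ciineq}{\CI}, exactly the monotonicity argument of Theorem~\ref{rem_dom1}. Your added verification that Algorithm~\ref{alg:findLCI} and the truncation $\alpha_j\mapsto\min\{\alpha_j,1\}$ preserve the right-hand side simply spells out what the paper dismisses as ``straightforward to verify.''
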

\begin{proof}
As $X$ is  nonnegative, it is straightforward to verify that if $X$ satisfies a \hyperlink{cilsineq}{\CILS} derived from a \hyperlink{lciineq}{\LCIp}, $X$ also satisfies any \hyperlink{cilsineq}{\CILS} derived from a \hyperlink{ciineq}{\CI} that can be lifted to the \hyperlink{lciineq}{\LCIp}.
\end{proof}

Any feasible solution of \hyperlink{miqp1}{\MIQP$_1$} such that $\trace(\bar{X}K) > \beta(\beta-1)$ generates a valid inequality for \hyperlink{modelqkplifted}{\QKP$_{\mbox{lifted}}$} that is violated by $\bar{X}$. Therefore, we do not need to solve \hyperlink{miqp1}{\MIQP$_1$} to optimality to generate a cut. Moreover, to generate distinct cuts, we can solve \hyperlink{miqp1}{\MIQP$_1$} several times (not necessarily to optimality), each time adding to it, the following ``no-good" cut to avoid the previously generated cuts:

\begin{equation}
\label{no-gooda}
\sum_{i\in N} \bar{\alpha}(i) (1-\alpha(i)) \geq  1,
\end{equation}
where $\bar{\alpha}$ is the value of the variable $\alpha$ in the solution of \hyperlink{miqp1}{\MIQP$_1$} when generating the previous cut. 

We note that, if  $\alpha^*,\beta^*,K^*$ solves \hyperlink{miqp1}{\MIQP$_1$}, then ${\alpha^*}'x\leq e'\alpha^* -1$ is a valid \hyperlink{ciineq}{\CI} for $\hyperlink{modelqkp}{\QKPp}$, however it may not be a minimal cover. Aiming at generating stronger valid cuts, based in Theorem \ref{rem_dom2},  we might add to the objective function of \hyperlink{miqp1}{\MIQP$_1$}, the term $-\delta e'\alpha$, for some weight $\delta>0$. The objective function would then favor minimal covers, which could  be lifted to a facet-defining  \hyperlink{lciineq}{\LCIp}, that would finally generate the \hyperlink{cilsineq}{\CILSp}. We should also emphasize that if the \hyperlink{cilsineq}{\CILS} derived from a \hyperlink{ciineq}{\CI} is violated by a given $\bar{X}$, then clearly, the \hyperlink{cilsineq}{\CILS} derived from the \hyperlink{lciineq}{\LCI} will also be violated by $\bar{X}$.

Now, we also note that, besides defining one cover inequality in the lifted space considering all possible pairs of indexes in $C$, we can also define a set of 
cover inequalities in the lifted space, considering for each inequality, a partition of the indexes in $C$ into subsets of cardinality 1 or 2. In this case, the right-hand side of the inequalities is never greater than $\beta/2$. The idea is made precise below.

\hypertarget{scilsineq}{
\begin{definition}[Set of cover inequalities in the lifted space, \SCILSp]
\label{defscils}
Let  $C \subset N $ and $\beta<|C|$ as in inequality \eqref{valid_ineq}.
Let 
\begin{enumerate}
\item $C_{s}:=\{(i_1,j_1),  \ldots,(i_{p},j_{p})\}$
be a partition of $C$, if $|C|$ is even.
\item $C_{s}:=\{(i_1,j_1),  \ldots,(i_{p},j_{p})\}$ be a  partition of $C\setminus \{i_0\} \,\, \mbox{for each} \,\,  i_0 \in C$, if $|C|$ is odd and $\beta$ is odd.
\item $C_{s}:=\{(i_0,i_0),(i_1,j_1),  \ldots,(i_p,j_p)\}$, where $\{(i_1,j_1),  \ldots,(i_{p},j_{p})\}$ is a  partition of $C\setminus \{i_0\} \,\, \mbox{for each} \,\,  i_0 \in C$, if $|C|$ is odd and $\beta$ is even.
\end{enumerate}
In all cases, $i_k<j_k$ for all $k=1,  \ldots,p$.\\
The inequalities in the \SCILS derived from \eqref{valid_ineq} are given by
\begin{equation}
\label{cut2}
\displaystyle \sum_{(i,j)\in C_{s}} X_{ij} \leq \left\lfloor \frac{\beta}{2} \right\rfloor, 
\end{equation}
for all partitions $C_{s}$ defined as above.
\end{definition}
}

\begin{theorem}
\label{thmscils}
 If inequality \eqref{valid_ineq} is valid for $\hyperlink{modelqkp}{\QKPp}$, then the inequalities in the \hyperlink{scilsineq}{\SCILS} \; \eqref{cut2} are valid for \hyperlink{modelqkplifted}{\QKP$_{\mbox{lifted}}$}.
\end{theorem}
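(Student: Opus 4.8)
The plan is to use that in \hyperlink{modelqkplifted}{\QKP$_{\mbox{lifted}}$} every feasible point has $x\in\{0,1\}^n$ and $X=xx^T$, so for $i\neq j$ the entry $X_{ij}$ equals the product $x_ix_j\in\{0,1\}$, while $X_{ii}=x_i$. Fix such a feasible $(x,X)$ and set $k:=\sum_{j\in C}x_j$; since \eqref{valid_ineq} is valid for $\hyperlink{modelqkp}{\QKPp}$ we have $k\leq\beta$. The right-hand side of \eqref{cut2} being $\lfloor\beta/2\rfloor$, and the map $t\mapsto\lfloor t/2\rfloor$ being nondecreasing on $\Z_+$, it will suffice to bound each left-hand side by $\lfloor k/2\rfloor$ (with one adjustment in Case 3).

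The core observation is an elementary matching/counting bound: if $\{(i_1,j_1),\dots,(i_p,j_p)\}$ is a family of pairwise disjoint pairs of indices of $C$, then the number of pairs with $x_{i_\ell}=x_{j_\ell}=1$ is at most $\lfloor k/2\rfloor$. Indeed, each such ``full'' pair consumes two distinct indices of $C$ carrying $x$-value $1$, and distinct pairs consume disjoint index sets, so if $m$ pairs are full then $2m\leq k$, hence $m\leq\lfloor k/2\rfloor$. Since $\sum_{(i,j)\in C_s}X_{ij}$ is exactly the count of full pairs in the partition $C_s$, Case 1 of Definition \ref{defscils} follows at once from $\lfloor k/2\rfloor\leq\lfloor\beta/2\rfloor$. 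Case 2 ($|C|$ odd, $\beta$ odd) is identical, since $C_s$ still consists of disjoint pairs drawn from $C$ (a partition of $C\setminus\{i_0\}$), so the same inequality applies verbatim.

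For Case 3 ($|C|$ odd, $\beta$ even) the left-hand side is $X_{i_0i_0}+\sum_{\ell}X_{i_\ell j_\ell}=x_{i_0}+(\text{number of full pairs among the }(i_\ell,j_\ell))$, where $\{(i_1,j_1),\dots,(i_p,j_p)\}$ partitions $C\setminus\{i_0\}$. I would split on $x_{i_0}$. If $x_{i_0}=0$, the sum is the number of full pairs, at most $\lfloor k/2\rfloor\leq\lfloor\beta/2\rfloor$. If $x_{i_0}=1$, then only $k-1$ indices of $C\setminus\{i_0\}$ are selected, so the number of full pairs is at most $\lfloor(k-1)/2\rfloor$, and the total is at most $1+\lfloor(k-1)/2\rfloor\leq 1+\lfloor(\beta-1)/2\rfloor=1+(\beta/2-1)=\beta/2=\lfloor\beta/2\rfloor$, using that $\beta$ is even. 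This exhausts all cases.

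The only point I would write out with care is the bookkeeping in Case 3: one must not double-count $i_0$, and one must use the parity of $\beta$ to absorb the extra $+1$ coming from the diagonal term $X_{i_0i_0}$. Everything else reduces to the one-line counting bound together with monotonicity of the floor, so I do not anticipate any substantive obstacle beyond keeping the case analysis tidy.
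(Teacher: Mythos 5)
Your proof is correct and follows essentially the same route as the paper: both rest on the lifting relation $X_{ij}=x_ix_j$ and the counting observation that each fully selected disjoint pair consumes two selected indices of $C$, so at most $\lfloor\beta/2\rfloor$ pairs (plus the diagonal term in the odd-$|C|$, even-$\beta$ case) can contribute. Your write-up merely makes the paper's brief case analysis more explicit, in particular by splitting on $x_{i_0}$ in Case~3.
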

\begin{proof} 
The proof of the validity of \hyperlink{scilsineq}{\SCILS} is based on the lifting relation $X_{ij}=x_ix_j$. We note that if the binary variable $x_i$ indicates whether or not the item $i$ is selected in the solution, the variable $X_{ij}$ indicates whether or not the pair of items  $i$ and $j$, are both selected in the solution. 
\begin{enumerate}
\item If $|C|$ is even,  $C_s$ is a partition of $C$ in exactly $|C|/2$ subsets with two elements each, and therefore, if  at most $\beta$ elements of $C$ can be selected in the solution, clearly at most $ \left\lfloor \frac{\beta}{2} \right\rfloor$ subsets of  $C_s$ can also be selected.
\item  If $|C|$ and $\beta$ are odd,   $C_s$ is a partition of $C\setminus \{i_0\} $ in exactly $|C-1|/2$ subsets with two elements each, where $i_0$ can be any element of $C$. In this case, if  at most $\beta$ elements of $C$ can be selected in the solution, clearly at most  $\frac{\beta-1}{2} \left(= \left\lfloor \frac{\beta}{2} \right\rfloor\right)$ subsets of  $C_s$ can also be selected.
\item  If $|C|$ is odd and $\beta$ is even,   $C_s$ is the union of $\{(i_0,i_0)\}$ with  a partition of $C\setminus \{i_0\} $ in exactly $|C-1|/2$ subsets with two elements each, where $i_0$ can be any element of $C$. In this case, if  at most $\beta$ elements of $C$ can be selected in the solution, clearly at most  $\frac{\beta}{2}\left( = \left\lfloor \frac{\beta}{2} \right\rfloor\right)$ subsets of  $C_s$ can also be selected. 
\end{enumerate}
\end{proof}

Given a solution  $(\bar{x},\bar{X})$  of \ConvRelp, we now present a mixed linear integer programming   (MILP) separation problem, which searches for  an inequality in \hyperlink{scilsineq}{\SCILS} that is most violated by $\bar{X}$.
Let $A\in \{0,1\}^{n\times \frac{n(n+1)}{2}}$. In the first $n$ columns of $A$ we have the $n\times n$ identity matrix. In  the remaining $n(n-1)/2$ columns of the matrix, there are exactly two elements equal to 1 in each column. All columns are distinct. For example, for $n=4$,
$$
A:=\left(\begin{array}{cccccccccc}
1&0&0&0&1&1&1&0&0&0\\
0&1&0&0&1&0&0&1&1&0\\
0&0&1&0&0&1&0&1&0&1\\
0&0&0&1&0&0&1&0&1&1\\
\end{array}\right).
$$
The columns of $A$ represent all the subsets of  items in $N$ with one or  two elements. Let
\hypertarget{milp2}{
\[
\begin{array}{lll}
z^*:=\, &\max_{\alpha,v,K,y} \trace(\bar{X}K) - 2v,&\multicolumn{1}{r}{(\mbox{\MILP}_2)}\\
\mbox{s.t.}&w'\alpha \geq c+1,\\
&K(i,i)=2y(i),&i=1,  \ldots,n,\\
&\sum_{i=1}^n y(i) \leq 1, \\
&K(i,j) =  \sum_{t=n+1}^{n(n+1)/2} A(i,t)A(j,t))y(t), & i,j=1,  \ldots,n,i<j,\\
&v \geq (e'\alpha-1)/2 - 0.5,\\
&v \leq (e'\alpha-1)/2,\\
&y(t)\leq 1-A(i,t) + \alpha(i),  & i=1,  \ldots,n,\; t=1,  \ldots,\frac{n(n+1)}{2},\\
&\alpha \leq Ay \leq \ \alpha +\left(\frac{n(n+1)}{2}\right)(1-\alpha),\\ 
&\alpha \in\{0,1\}^n, \; y\in\{0,1\}^{\frac{n(n+1)}{2}},\\
& v\in \mathbb{Z}, \; K\in \mathbb{S}^n.
\end{array}
\]
}

If $\alpha^*,v^*,K^*,y^*$ solves \hyperlink{milp2}{\MILP$_2$}, with $z^*>0$, then the particular inequality in \hyperlink{scilsineq}{\SCILS} given by 
\begin{equation}
\label{partscils}
\trace(K^*X)\leq 2v^*
\end{equation}
 is violated by $\bar{X}$. The binary vector $\alpha^*$ defines the \hyperlink{ciineq}{\CI} from which the cut is derived. As the \hyperlink{ciineq}{\CI} is given by $\alpha^*x\leq e'\alpha^*-1$, we can conclude that the cut generated either belongs to case (1) or (3) in Definition \ref{defscils}. This fact is considered in the formulation of \hyperlink{milp2}{\MILP$_2$}.  The vector $y^*$ defines a partition $C_{s}$ as presented in case (3), if $\sum_{i=1}^n y(i) =1$, and in case (1),  otherwise. We finally note that the number 2 in the right-hand side of \eqref{partscils} is due to the symmetry of the matrix $K^*$. 

We now may repeat the observations made for \hyperlink{miqp1}{\MIQP$_1$}.

Any feasible solution of \hyperlink{milp2}{\MILP$_2$} such that $\trace(\bar{X}K) > 2v$ generates a valid inequality for \ConvRel, which is violated by $\bar{X}$. Therefore, we do not need to solve \hyperlink{milp2}{\MILP$_2$} to optimality to generate a cut. Moreover, to generate distinct cuts, we can solve \hyperlink{milp2}{\MILP$_2$} several times (not necessarily to optimality), each time adding to it, the following suitable ``no-good" cut to avoid the previously generated cuts:

\begin{equation}
\label{no-goodb}
\sum_{i=1}^{\frac{n(n+1)}{2}} \bar{y}(i) (1-y(i)) \geq  1,
\end{equation}
where $\bar{y}$ is the value of the variable $y$ in the solution of \hyperlink{milp2}{\MILP$_2$}, when generating the previous cut. 

 The \hyperlink{ciineq}{\CI}\;  ${\alpha^*}'x\leq e'\alpha^* -1$  may not be a minimal cover. Aiming at generating stronger valid cuts,  we might add again to the objective function of \hyperlink{milp2}{\MILP$_2$}, the term $-\delta e'\alpha$, for some weight $\delta>0$. The objective function would then favor minimal covers, which could  be lifted to a facet-defining  \hyperlink{lciineq}{\LCIp}. In this case, however, after computing the \hyperlink{lciineq}{\LCIp}, we have to solve \hyperlink{milp2}{\MILP$_2$} again, with $\alpha$ fixed at values that represent the \hyperlink{lciineq}{\LCIp}, and $v$ fixed so that the right-hand side of the inequality is equal to the right-hand side of the \hyperlink{lciineq}{\LCIp}. All components of $y$ that were equal to 1 in the previous solution of \hyperlink{milp2}{\MILP$_2$} should also be fixed at 1. The new solution of \hyperlink{milp2}{\MILP$_2$} would indicate the other subsets of $N$ to be added to $C_{s}$. One last detail should be taken into account. If the cover $C$ corresponding to the \hyperlink{lciineq}{\LCI}, is such that $|C|$ is odd and the right-hand side of the \hyperlink{lciineq}{\LCI} is also odd, then the cut generated should belong to case (2) in Definition \ref{defscils}, and \hyperlink{milp2}{\MILP$_2$} should be modified accordingly. Specifically, the second and third constraints in \hyperlink{milp2}{\MILP$_2$}, should be modified respectively to 
$$
\begin{array}{ll}
K(i,i)=0,&i=1,  \ldots,n,\\
\sum_{i=1}^n y(i) = 1. \\
\end{array}
$$

\begin{remark}
Let $\gamma:=|C|$. Then, the number of inequalities in the \hyperlink{scilsineq}{\SCILS} is
$$
\frac{\gamma !}{2^{(\frac{\gamma}{2})}(\frac{\gamma}{2}!)},
$$
\\
if $\gamma$ is even, or
$$
\gamma \times \frac{(\gamma-1)!}{2^{(\frac{\gamma-1}{2})}(\frac{\gamma-1}{2}!)},
$$ 
if $\gamma$ is odd.
 \;
\end{remark}

Finally, we extend the ideas presented above to the more general case of knapsack inequalities. We note that the following discussion applies to a general \hyperlink{lciineq}{\LCI}, where  $\alpha_j \in \Z_+, \forall j \in N\backslash C$.

Let
\begin{equation}\label{knapsack_ineq}
\sum_{j\in N} \alpha_j x_j \leq \beta.
\end{equation}
be a valid knapsack inequality for \hyperlink{defkpol}{\KPol}, with $\alpha_j,\beta\in\Z_+, \beta \geq \alpha_j, \forall j\in N$.

\hypertarget{skilsineq}{
\begin{definition}[Set of knapsack inequalities in the lifted space, \SKILSp]\label{def:skils}
Let $\alpha_j$ be the coefficient  of $x_j$ in \eqref{knapsack_ineq}. Let $\{C_{1},  \ldots,C_{q}\}$ be the partition of $N$, such that $\alpha_u=\alpha_v$, if $u,v\in C_{k}$ for some $k$,  and $\alpha_u\neq \alpha_v$, otherwise.  The knapsack inequality \eqref{knapsack_ineq} can then be rewritten as 
\begin{equation}\label{knapsack_ineq2}
\sum_{k=1}^q \left(\tilde{\alpha}_{k} \sum_{j\in C_{k}} x_j \right)\leq \beta.
\end{equation}
Now, for $k=1,\dots,q$, let $C_{l_k}:=\{(i_{k_1},j_{k_1}),  \ldots,(i_{k_{p_k}},j_{k_{p_k}})\}$, where $i<j$ for all $(i,j)\in C_{l_k}$, and 
\begin{itemize}
\item $C_{l_k}$ is a partition of $C_{k}$, if $|C_{k}|$ is even. 
\item $C_{l_k}$ is a partition of $C_{k}\setminus \{i_{k_0}\}$, where $i_{k_0} \in C_{k}$, if $|C_{k}|$ is odd.
\end{itemize}
The inequalities in the \SKILS corresponding to \eqref{knapsack_ineq} are given by
\begin{equation}\label{eq:skils}
\displaystyle \sum_{k=1}^q \left(\tilde{\alpha}_{k} X_{i_{k_0}i_{k_0}} + 2\tilde{\alpha}_{k}\sum_{(i,j)\in C_{l_k}}  X_{ij} \right)\leq \beta , 
\end{equation}
for all partitions $C_{l_k}$, $k=1,  \ldots,q$, defined as above, and for all $i_{k_0} \in  C_{k}\setminus C_{l_k}$.
 (If $|C_{k}|$ is even, $C_{k}\setminus C_{l_k} = \emptyset$, and the term in the variable $X_{i_{k_0}i_{k_0}}$ does not exist.)
\end{definition}
}

\begin{remark}
Consider $\{C_{1},  \ldots,C_{q}\}$ as in Definition \ref{def:skils}.
For $k=1,  \ldots,q$, let $\gamma_k:=|C_k|$ and define
$$
NC_{l_k}:=\frac{\gamma_k !}{2^{(\frac{\gamma_k}{2})}(\frac{\gamma_k}{2}!)},
$$
if $\gamma_k$ is even, or
$$
NC_{l_k}:=\gamma_k \times \frac{(\gamma_k-1)!}{2^{(\frac{\gamma_k-1}{2})}(\frac{\gamma_k-1}{2}!)},
$$ 
if $\gamma_k$ is odd.

Then, the number of inequalities in \hyperlink{skilsineq}{\SKILS} is
$$
\prod_{k=1}^{q}NC_{l_k}.
$$
\end{remark}

\begin{remark} \label{rembeta2}
If $\tilde{\alpha}_{k}$ is even for every $k$, such that $\gamma_k$ is odd, then the right-hand side $\beta$ of inequality  \eqref{eq:skils} may be replaced with $2\times \left\lfloor \frac{\beta}{2} \right\rfloor$, which will strengthen the inequality in case $\beta$ is odd. 

Note that the case where $\gamma_k:=|C_{k}|$ is even for every $k$, is a particular case contemplated by this remark, where the the tightness of the inequality can also be applied. 
\end{remark}  

\begin{corollary}
\item If inequality \eqref{knapsack_ineq} is valid for  \hyperlink{modelqkp}{\QKP}, then the inequalities \eqref{eq:skils}, in the \hyperlink{skilsineq}{\SKILS} ,\;  are valid for \hyperlink{modelqkplifted}{\QKP$_{\mbox{lifted}}$}, whether or not the modification suggested  in Remark \ref{rembeta2} is applied.
\end{corollary}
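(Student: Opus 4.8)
The plan is to reduce this corollary to Theorem \ref{thmscils} by observing that the \hyperlink{skilsineq}{\SKILS} inequalities \eqref{eq:skils} are obtained by aggregating, over the blocks $C_1,\dots,C_q$, a collection of block-wise inequalities each of which is (up to scaling) exactly of the \hyperlink{scilsineq}{\SCILS} type already proved valid. First I would fix any binary feasible point $x\in\{0,1\}^n$ of \hyperlink{modelqkp}{\QKP} together with its lifting $X:=xx^T$; this is the only case that needs checking, since validity for \hyperlink{modelqkplifted}{\QKP$_{\mbox{lifted}}$} means validity at all such $(x,xx^T)$. For each block $C_k$ set $s_k:=\sum_{j\in C_k}x_j$, the number of selected items in $C_k$. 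The rewritten knapsack inequality \eqref{knapsack_ineq2} then reads $\sum_{k=1}^q\tilde\alpha_k s_k\le\beta$.

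Next I would show the key block identity: for a partition $C_{l_k}$ of $C_k$ (when $|C_k|$ is even) or of $C_k\setminus\{i_{k_0}\}$ (when $|C_k|$ is odd), one has
\[
X_{i_{k_0}i_{k_0}}+2\sum_{(i,j)\in C_{l_k}}X_{ij}\;\le\; s_k,
\]
with the convention that the first term is absent when $|C_k|$ is even. Indeed $X_{i_{k_0}i_{k_0}}=x_{i_{k_0}}^2=x_{i_{k_0}}\le 1$ and each $2X_{ij}=2x_ix_j\le x_i+x_j$ since $x_i,x_j\in\{0,1\}$; summing over the disjoint pairs of $C_{l_k}$ (together with the singleton $\{i_{k_0}\}$ in the odd case) the right-hand sides telescope to exactly $\sum_{j\in C_k}x_j=s_k$. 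Multiplying this block inequality by $\tilde\alpha_k\ge 0$ and summing over $k$ gives
\[
\sum_{k=1}^q\Bigl(\tilde\alpha_k X_{i_{k_0}i_{k_0}}+2\tilde\alpha_k\sum_{(i,j)\in C_{l_k}}X_{ij}\Bigr)\;\le\;\sum_{k=1}^q\tilde\alpha_k s_k\;\le\;\beta,
\]
which is precisely \eqref{eq:skils}. This handles the unmodified inequality.

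For the modification of Remark \ref{rembeta2} I would sharpen the block estimate when $\tilde\alpha_k$ is even for every $k$ with $\gamma_k=|C_k|$ odd. The point is that in that case each block term $\tilde\alpha_k X_{i_{k_0}i_{k_0}}+2\tilde\alpha_k\sum_{(i,j)\in C_{l_k}}X_{ij}$ is an even integer (it equals $2\tilde\alpha_k$ times the integer $\tfrac12 X_{i_{k_0}i_{k_0}}+\sum X_{ij}$ when $\tilde\alpha_k$ is even and $|C_k|$ odd, and it is $2\tilde\alpha_k\sum X_{ij}$ — also an even integer — when $|C_k|$ is even); hence the entire left-hand side of \eqref{eq:skils} is an even integer bounded above by $\beta$, and therefore bounded above by $2\lfloor\beta/2\rfloor$. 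The parenthetical remark that the all-even-$\gamma_k$ case is subsumed follows since then every block term is visibly $2\tilde\alpha_k\sum X_{ij}$, an even integer, independent of any parity assumption on the $\tilde\alpha_k$.

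I do not expect a genuine obstacle here: the statement is essentially a bookkeeping corollary of Theorem \ref{thmscils} applied block-by-block. The only mild subtlety worth stating carefully is the parity argument for Remark \ref{rembeta2} — one must be sure that \emph{every} block contributes an even integer (which is exactly why the hypothesis restricts the parity assumption to those $k$ with $\gamma_k$ odd, the even-$\gamma_k$ blocks contributing even integers automatically) so that the sum, being an even integer $\le\beta$, is $\le 2\lfloor\beta/2\rfloor$. Everything else is the observation that $2x_ix_j\le x_i+x_j$ and $x_i^2\le x_i$ on $\{0,1\}$, combined with nonnegativity of the $\tilde\alpha_k$ and the disjointness of the partition.
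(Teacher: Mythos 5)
Your proof is correct and follows essentially the same route as the paper, whose one-line proof simply invokes the counting argument of Theorem \ref{thmscils} based on the lifting relation $X_{ij}=x_ix_j$; your block-wise inequality $X_{i_{k_0}i_{k_0}}+2\sum_{(i,j)\in C_{l_k}}X_{ij}\le\sum_{j\in C_k}x_j$ (from $x_i^2=x_i$ and $2x_ix_j\le x_i+x_j$ on binaries) is that same argument written out algebraically and then aggregated with the weights $\tilde\alpha_k$. Your explicit parity argument justifying the Remark \ref{rembeta2} strengthening is a useful detail that the paper leaves implicit.
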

\begin{proof} The result is again verified, by using the same argument used in the proof of Theorem \ref{thmscils}, i.e.,  considering that $X_{ij}=1$, iff $x_i=x_j=1$.
\end{proof}

\subsection{Dominance relation among the new valid inequalities}

We start this subsection investigating whether \hyperlink{scilsineq}{\SCILS} dominates \hyperlink{cilsineq}{\CILS} or vice versa.

\begin{theorem}Let $C$ be the cover in \eqref{valid_ineq} and consider $\gamma := |C|$ to be even.
\begin{enumerate}
\item If  $\beta = \gamma - 1$, then the sum of all inequalities in \hyperlink{scilsineq}{\SCILS} is equivalent to \hyperlink{cilsineq}{\CILS}. Therefore, in this case, the set of inequalities in \hyperlink{scilsineq}{\SCILS} dominates \hyperlink{cilsineq}{\CILS}.
\item If $\beta < \gamma - 1$, there is no dominance relation between \hyperlink{scilsineq}{\SCILS} and \hyperlink{cilsineq}{\CILS}.
\end{enumerate}
\end{theorem}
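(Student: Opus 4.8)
The plan is to treat the two parts separately; each reduces to counting over the perfect matchings (pairing partitions) of the cover $C$ together with one or two elementary inequalities in $\beta$ and $\gamma$.

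For part~(1), I would first note that $\gamma$ even forces $\beta=\gamma-1$ to be odd, so $\lfloor\beta/2\rfloor=(\gamma-2)/2$. The inequalities of \hyperlink{scilsineq}{\SCILS} \eqref{cut2} are indexed by the perfect matchings $C_s$ of $C$; there are $(\gamma-1)!!=\gamma!/(2^{\gamma/2}(\gamma/2)!)$ of them, and each unordered pair $\{i,j\}\subseteq C$ lies in exactly $(\gamma-3)!!$ of them. Summing the inequalities $\sum_{(i,j)\in C_s}X_{ij}\le(\gamma-2)/2$ over all matchings, the left side collapses to $(\gamma-3)!!\sum_{i<j,\,i,j\in C}X_{ij}$ and the right side to $(\gamma-1)!!(\gamma-2)/2$; dividing by the positive constant $(\gamma-3)!!$ and using $(\gamma-1)!!/(\gamma-3)!!=\gamma-1$ yields exactly $\sum_{i<j,\,i,j\in C}X_{ij}\le(\gamma-1)(\gamma-2)/2=\binom{\beta}{2}$, i.e.\ the \hyperlink{cilsineq}{\CILS} \eqref{cut:gclique2}. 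Hence the \hyperlink{scilsineq}{\SCILS} system, summed up, is a positive multiple of \hyperlink{cilsineq}{\CILS} (so the aggregate is equivalent to it), and any $X$ feasible for all of \hyperlink{scilsineq}{\SCILS} is feasible for \hyperlink{cilsineq}{\CILS} --- the claimed dominance.

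For part~(2), with $\gamma$ even and $\beta<\gamma-1$, I would exhibit two witnesses, one for each direction of non-dominance; since only the entries $X_{ij}$ with $i<j$, $i,j\in C$, occur in either family, I specify those and set the rest to $0$. To see that \hyperlink{scilsineq}{\SCILS} does not dominate \hyperlink{cilsineq}{\CILS}, take $X_{ij}=2\lfloor\beta/2\rfloor/\gamma$ for all such pairs: a matching has $\gamma/2$ pairs, so each \hyperlink{scilsineq}{\SCILS} inequality reads $\lfloor\beta/2\rfloor\le\lfloor\beta/2\rfloor$ and holds, while \hyperlink{cilsineq}{\CILS} becomes $(\gamma-1)\lfloor\beta/2\rfloor\le\binom{\beta}{2}$, which fails because $2(\gamma-1)\lfloor\beta/2\rfloor>\beta(\beta-1)$ (a one-line check in each parity of $\beta$, using $\beta<\gamma-1$). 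To see that \hyperlink{cilsineq}{\CILS} does not dominate \hyperlink{scilsineq}{\SCILS}, fix one matching $C_s^0$; since $\beta<\gamma-1$ and $\gamma$ is even we have $\lfloor\beta/2\rfloor+1\le\gamma/2$, so we may set $X_{ij}=1$ on $\lfloor\beta/2\rfloor+1$ of the pairs of $C_s^0$ and $0$ elsewhere. Then \hyperlink{cilsineq}{\CILS} reads $\lfloor\beta/2\rfloor+1\le\binom{\beta}{2}$, which holds (indeed $\lfloor\beta/2\rfloor+1\le\beta\le\binom{\beta}{2}$), whereas the \hyperlink{scilsineq}{\SCILS} inequality for $C_s^0$ reads $\lfloor\beta/2\rfloor+1\le\lfloor\beta/2\rfloor$, which is violated. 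Together these give the incomparability.

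The main obstacle is not any single step --- the matching count in~(1) and the parity checks in~(2) are routine --- but noticing that $\beta=2$ is genuinely exceptional: there the second witness collapses (since $\lfloor\beta/2\rfloor+1=2>1=\binom{\beta}{2}$), and in fact the trivial sub-sum bound $\sum_{(i,j)\in C_s}X_{ij}\le\sum_{i<j,\,i,j\in C}X_{ij}\le\binom{2}{2}$ shows that \hyperlink{cilsineq}{\CILS} does dominate \hyperlink{scilsineq}{\SCILS} when $\beta=2$. I would therefore state part~(2) under the (evidently intended) hypothesis $\beta\ge3$ --- equivalently $\binom{\beta}{2}>\lfloor\beta/2\rfloor$, which is precisely what the second witness needs --- or record $\beta=2$ as a separate sub-case.
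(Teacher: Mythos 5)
Your part (1) is essentially the paper's own argument: sum the SCILS inequalities over all pairings of $C$, note each pair $\{i,j\}\subseteq C$ occurs in the same number of pairings, and divide; the paper does exactly this counting (writing $\gamma!/(2^{\gamma/2}(\gamma/2)!)$ and $(\gamma-2)!/(2^{(\gamma-2)/2}((\gamma-2)/2)!)$ where you write $(\gamma-1)!!$ and $(\gamma-3)!!$) and then substitutes $\beta=\gamma-1$, $\lfloor\beta/2\rfloor=(\beta-1)/2$. Part (2) is where you genuinely diverge, and in a good way: the paper proves non-dominance only by two concrete instances ($\gamma=6$ with $\beta=3$ and $\beta=4$), which strictly establishes the claim just for those cases, whereas you give a parametric pair of witnesses valid for all even $\gamma$ and all admissible $\beta$ --- the uniform fractional point $X_{ij}=2\lfloor\beta/2\rfloor/\gamma$ (feasible for every SCILS inequality but violating CILS since $(\gamma-1)\lfloor\beta/2\rfloor>\binom{\beta}{2}$ when $\beta<\gamma-1$), and the $0/1$ point supported on $\lfloor\beta/2\rfloor+1$ pairs of a fixed pairing (feasible for CILS but violating that SCILS inequality). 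Your witnesses are fractional/partial-support points rather than the paper's integral ones, which is perfectly legitimate for a dominance comparison. Moreover, your observation about $\beta=2$ is a real catch, not a cosmetic caveat: the CILS definition only requires $\beta>1$, so $\beta=2<\gamma-1$ is allowed by the theorem as stated, and there the sub-sum argument $\sum_{(i,j)\in C_s}X_{ij}\le\sum_{i<j,\,i,j\in C}X_{ij}\le 1$ shows CILS (with $X\ge 0$) does dominate SCILS, so part (2) needs the hypothesis $\beta\ge 3$ (or $\beta=2$ treated separately); the paper's proof, resting on $\beta\in\{3,4\}$, silently skips this case. In short: same route for (1), a more general and more careful argument for (2) that also corrects the statement's range of validity.
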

\begin{proof}
Let $sum(\hyperlink{scilsineq}{\SCILS})$ denote the inequality obtained by adding all inequalities in \hyperlink{scilsineq}{\SCILS}, and let $rhs(sum(\hyperlink{scilsineq}{\SCILS}))$ denote its right-hand side (rhs). We have that 
$rhs(sum(\hyperlink{scilsineq}{\SCILS}))$ is equal to the number of inequalities in \hyperlink{scilsineq}{\SCILS} multiplied by the rhs of each inequality, i.e.:  
$$
rhs(sum(\hyperlink{scilsineq}{\SCILS})) = \frac{\gamma !}{2^{(\frac{\gamma}{2})}(\frac{\gamma}{2}!)} \times \left\lfloor \frac{\beta}{2} \right\rfloor.
$$
The coefficient of each variable $X_{ij}$ in $sum(\hyperlink{scilsineq}{\SCILS})$ ($coef_{ij}$) is given by the number of inequalities in the set \hyperlink{scilsineq}{\SCILS} in which $X_{ij}$ appears, i.e.:
$$
coef_{ij} = \frac{(\gamma-2) !}{2^{(\frac{(\gamma-2)}{2})}(\frac{(\gamma-2)}{2}!)}
$$
Dividing $rhs(sum(\hyperlink{scilsineq}{\SCILS}))$ by $coef_{ij}$, we obtain  
\begin{equation}\label{rhs1}
rhs(sum(\hyperlink{scilsineq}{\SCILS})) / coef_{ij} = (\gamma-1) \times \left\lfloor \frac{\beta}{2} \right\rfloor.
\end{equation}
On the other side, the rhs of \hyperlink{cilsineq}{\CILS} is:
\begin{equation}\label{rhs2}
rhs(\hyperlink{cilsineq}{\CILS}) = \binom{\beta}{2} = \frac{\beta(\beta-1)}{2}.
\end{equation}

\begin{enumerate}
\item Replacing  $\beta$ with $\gamma - 1$, and $\left\lfloor \frac{\beta}{2} \right\rfloor$ with $\frac{\beta-1}{2}$ (since $\beta$ is odd), we obtain the result. 
\item Consider, for example, $C=\{1,2,3,4,5,6\}$ and $\beta=3$ ($\beta < \gamma -1$ and odd). In this case, the \hyperlink{cilsineq}{\CILS} becomes:
\begin{align*}
 X_{12}+X_{13}&+X_{14}+X_{15}+X_{16}+X_{23}+X_{24}  \\
&+X_{25}+X_{26}+X_{34}+X_{35}+X_{36}+X_{45}+X_{46}+X_{56}\leq 3.
\end{align*}
And a particular inequality in \hyperlink{scilsineq}{\SCILS} is
\begin{equation}\label{ex1}
X_{12}+X_{34}+X_{56}\leq 1.
\end{equation}
The solution $X_{1j}=1$, for $j=2,  \ldots, 6$, and all other variables equal to zero, satisfies all inequalities in \hyperlink{scilsineq}{\SCILS}, because only one of the positive variables appears in each inequality in the set. However, the solution does not satisfy \hyperlink{cilsineq}{\CILS}. On the other side, the solution $X_{12}=X_{34}=X_{56}=1$, and all other variables equal to zero, satisfies \hyperlink{cilsineq}{\CILS}, but does not satisfy \eqref{ex1}.

Now, consider 
$C=\{1,2,3,4,5,6\}$ and $\beta=4$ ($\beta < \gamma -1$ and even). In this case, the \hyperlink{cilsineq}{\CILS} becomes:
\begin{align*} X_{12}+X_{13}&+X_{14}+X_{15}+X_{16}+X_{23}+X_{24}    \\
&+X_{25}+X_{26}+X_{34}+X_{35}+X_{36}+X_{45}+X_{46}+X_{56}\leq 6.
\end{align*}
And a particular inequality in \hyperlink{scilsineq}{\SCILS} is
\begin{equation}\label{ex2}
X_{12}+X_{34}+X_{56}\leq 2.
\end{equation}
The solution $X_{1j}=1$, for $j=2,  \ldots, 6$, $X_{2j}=1$, for $j=3,  \ldots, 6$, and all other variables equal to zero, satisfies all inequalities in \hyperlink{scilsineq}{\SCILS}, because at most two of the positive variables appear in each inequality in the set. However, the solution does not satisfy \hyperlink{cilsineq}{\CILS}.
 On the other side, the solution $X_{12}=X_{34}=X_{56}=1$, and all other variables equal to zero, satisfies \hyperlink{cilsineq}{\CILS}, but does not satisfy \eqref{ex2}. 
\end{enumerate}
\end{proof}

\begin{theorem}Let $C$ be the cover in \eqref{valid_ineq} and consider $\gamma := |C|$ to be odd.
 Then there is no dominance relation between \hyperlink{scilsineq}{\SCILS} and \hyperlink{cilsineq}{\CILS}.

\end{theorem}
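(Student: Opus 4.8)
The plan is to establish \emph{both} non-implications by exhibiting explicit nonnegative matrices, in the spirit of the proof of the even-$\gamma$ theorem. Since $\gamma:=|C|$ is odd, every inequality in \hyperlink{scilsineq}{\SCILS} falls under case (2) or case (3) of Definition~\ref{defscils}: it has exactly $(\gamma-1)/2$ off-diagonal terms $X_{ij}$ (plus one diagonal term $X_{i_0 i_0}$ in case (3)), and right-hand side $\lfloor \beta/2 \rfloor$; whereas \hyperlink{cilsineq}{\CILS} has all $\binom{\gamma}{2}$ off-diagonal terms and right-hand side $\binom{\beta}{2}$. Throughout I use $1<\beta<\gamma$ together with the elementary bound $2\lfloor \beta/2 \rfloor \geq \beta-1$.

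First I would show that \hyperlink{scilsineq}{\SCILS} does not dominate \hyperlink{cilsineq}{\CILS}. Take the symmetric matrix $\bar X$ with $\bar X_{ij}=t:=\frac{2\lfloor \beta/2 \rfloor}{\gamma-1}$ for all $i\neq j$ with $i,j\in C$, and $\bar X$ zero in every other entry (in particular on the diagonal); note $t\in(0,1]$ since $2\lfloor\beta/2\rfloor\leq\beta\leq\gamma-1$. Each inequality in \hyperlink{scilsineq}{\SCILS} evaluates at $\bar X$ to $t(\gamma-1)/2=\lfloor \beta/2 \rfloor$, so $\bar X$ satisfies all of them with equality. On the other hand \hyperlink{cilsineq}{\CILS} evaluates at $\bar X$ to $\binom{\gamma}{2}t=\gamma\lfloor \beta/2 \rfloor$, and since $2\lfloor \beta/2 \rfloor\geq \beta-1$ and $\gamma>\beta$,
\[
2\gamma\lfloor \beta/2 \rfloor \;\geq\; (\beta-1)\gamma \;>\; (\beta-1)\beta \;=\; 2\binom{\beta}{2},
\]
so $\bar X$ violates \hyperlink{cilsineq}{\CILS}. (One can also use $0/1$ ``star'' matrices as in the even-$\gamma$ proof, but the fractional point handles all odd $\gamma$ uniformly, including the extremal $\beta=\gamma-1$.)

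Next I would show that \hyperlink{cilsineq}{\CILS} does not dominate \hyperlink{scilsineq}{\SCILS}; here it suffices to violate one chosen inequality of \hyperlink{scilsineq}{\SCILS} while keeping \hyperlink{cilsineq}{\CILS} satisfied. Fix $i_0\in C$, a partition $\{(i_1,j_1),\ldots,(i_p,j_p)\}$ of $C\setminus\{i_0\}$ (so $p=(\gamma-1)/2$), and the corresponding inequality of \hyperlink{scilsineq}{\SCILS}. If $\beta$ is even (case (3)), set $\hat X_{i_0 i_0}=1$ and $\hat X_{i_k j_k}=\hat X_{j_k i_k}=1$ for $k=1,\ldots,\beta/2$ (legitimate since $\beta/2\leq p$ as $\beta\leq\gamma-1$), all other entries $0$: the chosen inequality then reads $1+\beta/2>\beta/2=\lfloor \beta/2 \rfloor$, while \hyperlink{cilsineq}{\CILS} reads $\beta/2\leq\binom{\beta}{2}$ since $\beta\geq 2$. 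If $\beta$ is odd (case (2)), set $\hat X_{i_k j_k}=\hat X_{j_k i_k}=1$ for $k=1,\ldots,(\beta+1)/2$ (legitimate since $(\beta+1)/2\leq p$, because $\gamma,\beta$ both odd forces $\beta\leq\gamma-2$), all other entries $0$: the chosen inequality reads $(\beta+1)/2>(\beta-1)/2=\lfloor \beta/2 \rfloor$, while \hyperlink{cilsineq}{\CILS} reads $(\beta+1)/2\leq\binom{\beta}{2}$ since $\beta\geq 3$. In either case $\hat X\geq 0$ satisfies \hyperlink{cilsineq}{\CILS} but violates \hyperlink{scilsineq}{\SCILS}.

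Together these witnesses give the claim for every admissible $\beta$. I expect the only delicate point to be the parity bookkeeping: unlike the even case, when $\gamma$ is odd the extremal value $\beta=\gamma-1$ is \emph{even}, so there is no analogue of the ``sum of all \hyperlink{scilsineq}{\SCILS} equals \hyperlink{cilsineq}{\CILS}'' phenomenon; the diagonal slack $X_{i_0 i_0}$ present in case (3) is precisely what makes \hyperlink{cilsineq}{\CILS} fail to dominate \hyperlink{scilsineq}{\SCILS} even there, while for odd $\beta$ one must check that the partition contains enough pairs, which is exactly where $\beta\leq\gamma-2$ enters. None of this is a conceptual obstacle, only a matter of tracking the counts carefully.
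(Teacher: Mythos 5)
Your argument is correct, and it in fact establishes more than the paper's own proof does. The paper proves the absence of dominance by exhibiting two concrete instances ($\gamma=5$ with $\beta=3$ and $\beta=4$) and checking both directions by hand on $0/1$ star-type matrices; your proof instead gives parametric witnesses valid for every odd $\gamma$ and every admissible $1<\beta<\gamma$. The uniform fractional matrix with off-diagonal entries $t=2\lfloor\beta/2\rfloor/(\gamma-1)$ is a nice device: it saturates every inequality of \SCILS simultaneously (each such inequality has exactly $(\gamma-1)/2$ off-diagonal terms, plus a diagonal term that vanishes on your point), while overshooting \CILS because $\gamma\lfloor\beta/2\rfloor>\binom{\beta}{2}$ whenever $1<\beta<\gamma$; a single $0/1$ star would not suffice for larger $\beta$, so the fractional point is precisely what buys uniformity, and it is a legitimate witness since dominance here is assessed over nonnegative points with entries in $[0,1]$, exactly as in the paper's own dominance arguments. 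For the converse direction your partial-matching point, with the diagonal entry $X_{i_0i_0}=1$ in the even-$\beta$ case and the parity observation $\beta\le\gamma-2$ when $\beta$ and $\gamma$ are both odd, is checked correctly ($\beta/2\le\binom{\beta}{2}$ for $\beta\ge2$ and $(\beta+1)/2\le\binom{\beta}{2}$ for $\beta\ge3$). So the route is the same in spirit as the paper's --- explicit counterexamples in the lifted space --- but yours treats all odd $\gamma$ and all admissible $\beta$ at once, whereas the published proof argues by representative examples.
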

\begin{proof}

Consider, for example, $C=\{1,2,3,4,5\}$ and $\beta=3$ ($\beta$  odd). In this case, the \hyperlink{cilsineq}{\CILS} becomes:
$$ X_{12}+X_{13}+X_{14}+X_{15}+X_{23}+X_{24}+X_{25}+X_{34}+X_{35}+X_{45}\leq 3.
$$
And a particular inequality in \hyperlink{scilsineq}{\SCILS} is
\begin{equation}\label{ex3}
X_{23}+X_{45}\leq 1.
\end{equation}
The solution $X_{1j}=1$, for $j=1,  \ldots, 5$, and all other variables equal to zero, satisfies all inequalities in \hyperlink{scilsineq}{\SCILS}, because only one of the positive variables appears in each inequality in the set. However, the solution does not satisfy \hyperlink{cilsineq}{\CILS}. On the other side, the solution $X_{23}=X_{45}=1$, and all other variables equal to zero, satisfies \hyperlink{cilsineq}{\CILS}, but does not satisfy \eqref{ex3}.

Now, consider 
$C=\{1,2,3,4,5\}$ and $\beta=4$ ($\beta$ even). In this case, the \hyperlink{cilsineq}{\CILS} becomes:
$$ X_{12}+X_{13}+X_{14}+X_{15}+X_{23}+X_{24}+X_{25}+X_{34}+X_{35}+X_{45}\leq 6.
$$
And a particular inequality in \hyperlink{scilsineq}{\SCILS} is
\begin{equation}\label{ex4}
X_{11}+X_{23}+X_{45}\leq 2.
\end{equation}
The solution $X_{1j}=1$, for $j=1,  \ldots, 5$, $X_{2j}=1$, for $j=2,  \ldots, 5$, and all other variables equal to zero, satisfies all inequalities in \hyperlink{scilsineq}{\SCILS}, because at most two of the positive variables appear in each inequality in the set. However, the solution does not satisfy \hyperlink{cilsineq}{\CILS}.
 On the other side, the solution $X_{11}=X_{23}=X_{45}=1$, and all other variables equal to zero, satisfies \hyperlink{cilsineq}{\CILS}, but does not satisfy \eqref{ex4}.
\end{proof}

Now, we investigate if \hyperlink{scilsineq}{\SCILS} is just a particular case of \hyperlink{skilsineq}{\SKILS}, when  $\alpha_j\in\{0,1\}$, for all $j\in N$ in \eqref{knapsack_ineq}. 

\begin{theorem} In case the modification suggested in Remark \ref{rembeta2} is applied, then if $|C|$ is even in \eqref{valid_ineq}, \hyperlink{scilsineq}{\SCILS} becomes just a particular case of \hyperlink{skilsineq}{\SKILS}. In case $|C|$ is odd, however, the inequalities in \hyperlink{scilsineq}{\SCILS} are stronger. 
\end{theorem}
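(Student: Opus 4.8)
The plan is to view the cover‑type inequality \eqref{valid_ineq} as a particular knapsack inequality \eqref{knapsack_ineq} and then unwind Definitions \ref{defscils} and \ref{def:skils} on this common input, comparing the two resulting families of lifted inequalities case by case according to the parities of $|C|$ and $\beta$. Concretely, \eqref{valid_ineq} is \eqref{knapsack_ineq} with $\alpha_j=1$ for $j\in C$ and $\alpha_j=0$ otherwise, so in the notation of Definition \ref{def:skils} the equal‑coefficient partition reduces --- up to the irrelevant zero‑coefficient block --- to the single block $C_1=C$ with $\tilde\alpha_1=1$, and \eqref{knapsack_ineq2} is just $\sum_{j\in C}x_j\le\beta$. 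Hence every \hyperlink{skilsineq}{\SKILS} inequality \eqref{eq:skils} generated from \eqref{valid_ineq} has the form
\[
X_{i_0i_0} + 2\sum_{(i,j)\in C_{l_1}}X_{ij}\le\beta ,
\]
with $C_{l_1}$ a partition of $C$ (and the diagonal term absent) when $|C|$ is even, and a partition of $C\setminus\{i_0\}$, $i_0\in C$, when $|C|$ is odd.

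For $|C|$ even, the unique coefficient block has even size $\gamma_1=|C|$, so the hypothesis of Remark \ref{rembeta2} --- that $\tilde\alpha_k$ be even whenever $\gamma_k$ is odd --- holds vacuously, the modification is available, and it turns the inequality above into $2\sum_{(i,j)\in C_{l_1}}X_{ij}\le 2\lfloor\beta/2\rfloor$, i.e. $\sum_{(i,j)\in C_{l_1}}X_{ij}\le\lfloor\beta/2\rfloor$, which is exactly a \hyperlink{scilsineq}{\SCILS} inequality of case (1) of Definition \ref{defscils}. Since the partitions $C_{l_1}$ of $C$ allowed in \hyperlink{skilsineq}{\SKILS} are precisely the partitions $C_s$ of $C$ allowed in \hyperlink{scilsineq}{\SCILS}, the two families coincide, giving the first assertion.

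For $|C|$ odd, the key point is that now $\gamma_1=|C|$ is odd while $\tilde\alpha_1=1$ is odd, so Remark \ref{rembeta2} does \emph{not} apply and the \hyperlink{skilsineq}{\SKILS} inequality keeps right‑hand side $\beta$, namely $X_{i_0i_0}+2\sum_{(i,j)\in C_{l_1}}X_{ij}\le\beta$. I would compare it, for the same $i_0$ and the same partition, with the matching \hyperlink{scilsineq}{\SCILS} inequality: when $\beta$ is even this is case (3), $X_{i_0i_0}+\sum_{(i,j)\in C_{l_1}}X_{ij}\le\beta/2$, equivalently $2X_{i_0i_0}+2\sum_{(i,j)\in C_{l_1}}X_{ij}\le\beta$, which (by $X\ge0$) implies the \hyperlink{skilsineq}{\SKILS} inequality; when $\beta$ is odd this is case (2), $\sum_{(i,j)\in C_{l_1}}X_{ij}\le(\beta-1)/2$, equivalently $2\sum_{(i,j)\in C_{l_1}}X_{ij}\le\beta-1$, which together with $X_{i_0i_0}\le1$ (valid since $X_{i_0i_0}=x_{i_0}^2=x_{i_0}$ on \hyperlink{modelqkplifted}{\QKP$_{\mbox{lifted}}$}, and present in the usual McCormick relaxation) again implies the \hyperlink{skilsineq}{\SKILS} inequality. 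Strictness I would get by exhibiting, in each subcase, a fractional point satisfying every \hyperlink{skilsineq}{\SKILS} inequality but violating a \hyperlink{scilsineq}{\SCILS} one --- for instance, for $\beta$ even take $X_{i_0i_0}=1$ and $\sum_{(i,j)\in C_{l_1}}X_{ij}$ strictly between $\beta/2-1$ and $(\beta-1)/2$ with all other relevant entries zero, and an analogous point for $\beta$ odd --- so that \hyperlink{scilsineq}{\SCILS} is strictly stronger.

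The main obstacle is bookkeeping rather than depth: one must keep straight which clause of Definition \ref{defscils} (cases (1)--(3)) and which clause of Definition \ref{def:skils} is in force for each combination of parities, and be careful about the domain over which ``stronger'' is asserted. The easy implications need only $X\ge0$, but the ``$|C|$ odd, $\beta$ odd'' subcase additionally requires the diagonal bound $X_{i_0i_0}\le1$, so that hypothesis --- which does hold on \hyperlink{modelqkplifted}{\QKP$_{\mbox{lifted}}$} and in standard relaxations --- should be stated explicitly; checking that the separating points used for strictness lie in the intended relaxation is then routine once their diagonal entries are kept at most $1$.
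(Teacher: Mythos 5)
Your proposal is correct and follows essentially the same route as the paper's proof: unwind Definitions \ref{defscils} and \ref{def:skils} on the cover inequality, note that the Remark \ref{rembeta2} modification applies when $|C|$ is even (giving coincidence of the two families) but not when $|C|$ is odd (since $\tilde\alpha=1$ is odd), and compare the written-out inequalities case by case in $\beta$. You are in fact more careful than the paper's terse argument, in particular by making explicit that the $|C|$ odd, $\beta$ odd subcase needs the diagonal bound $X_{i_0i_0}\le 1$ (valid on \hyperlink{modelqkplifted}{\QKP$_{\mbox{lifted}}$} and in the relaxations used) and by sketching separating points for strictness, neither of which the paper spells out.
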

\begin{proof}
If $|C|$ is even, the result is easily verified. If $|C|$ is odd, the inequalities in \hyperlink{scilsineq}{\SCILS} become 
\[
 2\sum_{(i,j)\in C_{s}} X_{ij} \leq  \beta-1, 
\]
if  $\beta$ is odd, and
\[
\displaystyle 2X_{i_{0}i_{0}} + 2\sum_{(i,j)\in C_{s}} X_{ij} \leq  \beta, 
\]
if  $\beta$ is even, 
and the inequalities in \hyperlink{skilsineq}{\SKILS} become 
$$
X_{i_{0}i_{0}} + 2\sum_{(i,j)\in C_{s}}  X_{ij} \leq \beta, 
$$
for all $\beta$. In all cases,  $C_{s}$ is a partition of $C\setminus \{i_{0}\}$, where $i_{0} \in C$.

Either with $\beta$ even or odd, it becomes clear that \hyperlink{scilsineq}{\SCILS} is stronger than \hyperlink{skilsineq}{\SKILS}.
\end{proof}

\section{Lower bounds from solutions of the relaxations for  \QKP$_{\mbox{lifted}}$}
\label{SectionFeaSolQAP}
In order to evaluate the quality of the upper bounds obtained with \ConvRelp, we compare them with lower bounds for $\hyperlink{modelqkp}{\QKPp}$, given by feasible solutions constructed by a heuristic. We assume in this section that all variables in \ConvRel are constrained to the interval $[0,1]$.

Let $(\bar{x},\bar{X})$ be a solution of \ConvRelp. We initially apply principal component analysis (PCA) \cite{Jolliffe2010} to construct an  approximation to the solution of $\hyperlink{modelqkp}{\QKP}$ and then apply a special rounding procedure to obtain a feasible solution from it.
PCA selects the largest eigenvalue and the corresponding eigenvector of $\bar{X}$, denoted by
$\bar{\lambda}$ and $\bar{v}$, respectively. Then $\bar{\lambda} \bar{v} \bar{v}^T$ is a
rank-one approximation of $\bar{X}$. We set $\bar x = \bar{\lambda}^{\frac 12} \bar{v}$ to be an
approximation of the solution $x$ of $\hyperlink{modelqkp}{\QKPp}$. We note that $\bar{\lambda}> 0$ because $\bar{X}_{ii}>0$ for at least one index $i$ in the optimal solutions of the relaxations, and therefore, $\bar{X}$ is not negative semidefinite.  Finally, we round $\bar x$ to a binary solution that satisfies the knapsack capacity constraint, using the simple approach described in Algorithm \ref{alg:rounding}.
\begin{algorithm}
\small{
\caption{ A heuristic for the QKP }\label{alg:rounding}
\begin{quote}
Input: {the solution $\bar X$ from \ConvRel, the weight vector $w$, the capacity $c$}. \\
Let $\bar{\lambda}$ and $\bar{v}$ be, respectively, the largest eigenvalue and the corresponding eigenvector of $\bar{X}$.\\
Set $\bar x = \bar{\lambda}^{\frac 12} \bar{v}$. \\
Round $\bar x$ to $\hat x\in \{0,1\}^n$. \\
While{ $w^T \hat x > c$}
\begin{quote}
Set $i = \mbox{argmin}_{j \in N}\{\bar x_j | \bar x_j > 0\}$. \\
Set $\bar x_i = 0$, $\hat x_i = 0$.\\
\end{quote}
End\\
Output: a feasible solution $\hat x$ of $\hyperlink{modelqkp}{\QKPp}$.
\end{quote}
}
\end{algorithm}

\section{Numerical Experiments}
\label{sect:numerics} 
We summarize our algorithm \verb;CWICS; (Convexification With Integrated Cut Strengthening)   in Algorithm \ref{algframework}, where at each  iteration we update the perturbation $Q_p$ of the parametric relaxation and, at every $m$ iterations, we add to the relaxation, the valid inequalities considered in this paper, namely, 
 \hyperlink{sciineq}{\SCIp}, defined in \eqref{cut1}, \hyperlink{cilsineq}{\CILSp}, defined in \eqref{cut:gclique2}, and \hyperlink{scilsineq}{\SCILSp}, defined in \eqref{cut2}. 

\begin{algorithm}
\small{
\nonumber
\label{algframework}\caption{CWICS(ConvexificationWithIntegratedCutStrengthening)}
\begin{quote}
Input:  $Q\in \mathbb{S}^n$, $max.n_{cuts}$.\\
$k:=0$, $B_{0}:=I$, $\mu^{0}:=1$.\\
Let $\lambda_i(Q),v_i$ be the $i^{th}$ largest eigenvalue of $Q$ and corresponding eigenvector.\\
$Q_n:=\sum_{i=1}^{n}(-|\lambda_i(Q)|-1)v_iv_i'$ (or  $Q_n:=\sum_{i=1}^{n}(\min\{\lambda_i,-10^{-6}\})v_iv_i'$), $Q^{0}_p:= Q - Q_n$.\\
Solve  $\hyperlink{modelcqppert}{\CQP_{Q_p}}$, with $Q_p:=Q^{0}_p$, and obtain $x(Q^{0}_p)$, $X(Q^{0}_p)$.\\
$\nabla \pCQP(Q^{0}_p) := X(Q^{0}_p) - x(Q^{0}_p)x(Q^{0}_p)^T$.\\
$Z^{0}:=Q^{0}_p-Q$.\\
$\Lambda^{0}:= \nabla \pCQP(Q^{0}_p) +(2|\lambda_{\min}(\nabla \pCQP(Q^{0}_p)|+0.1)I$.\\
While (stopping criterium is violated) 
\begin{quote}
Run Algorithm \ref{alg:ipm_iteration}, where $Q^{k+1}_p$ is obtained and relaxation $\hyperlink{modelcqppert}{\CQP_{Q_p}}$,  with $Q_p:=Q^{k+1}_p$ is solved. Let $(x(Q^{k+1}_p), X(Q^{k+1}_p))$ be its optimal solution.\\
$upper.bound^{k+1} := \pCQP(Q^{k+1}_p)$.\\
Run Algorithm \ref{alg:rounding}, where  $\hat{x}$ is obtained.\\
$lower.bound^{k+1} := \hat{x}^TQ\hat{x}$. \\
If $k$ mod $m$ == 0\\
\begin{quote}
Solve problem \eqref{eq:optextreme} and obtain cuts \hyperlink{sciineq}{\SCI} in \eqref{cut1}. \\
Add  the  $\max\{n,max.n_{cuts}\}$ cuts \hyperlink{sciineq}{\SCI} with the largest violations at $(x(Q^{k+1}_p), X(Q^{k+1}_p))$, to $\hyperlink{modelcqppert}{\CQP_{Q_p}}$.\\
$n_{cuts}:=0$.\\
While ($n_{cuts}< max.n_{cuts}$ \&  \hyperlink{miqp1}{\MIQP$_1$} feasible)\\
\begin{quote}
Solve  \hyperlink{miqp1}{\MIQP$_1$} and add the \hyperlink{ciineq}{\CI} and \hyperlink{cilsineq}{\CILS} obtained   to $\hyperlink{modelcqppert}{\CQP_{Q_p}}$.\\
Add the ``no-good" cut \eqref{no-gooda} to  \hyperlink{miqp1}{\MIQP$_1$}.\\
$ n_{cuts}:= n_{cuts}+1$.\\
\end{quote}
End \\
$n_{cuts}:=0$.\\
While ($n_{cuts}< max.n_{cuts}$ \& \hyperlink{milp2}{\MILP$_2$} feasible)\\
\begin{quote}
Solve  \hyperlink{milp2}{\MILP$_2$} and add the \hyperlink{ciineq}{\CI} and \hyperlink{scilsineq}{\SCILS} obtained  to $\hyperlink{modelcqppert}{\CQP_{Q_p}}$.\\
Add the ``no-good" cut \eqref{no-goodb} to  \hyperlink{milp2}{\MILP$_2$}.\\
$ n_{cuts}:= n_{cuts}+1$.\\
\end{quote}
End \\
\end{quote}
End\\
$k:=k+1$.\\
\end{quote}
End\\
Output: Upper bound $upper.bound^{k}$, lower bound  $lower.bound^{k}$, and feasible solution $\hat{x}$ to $\hyperlink{modelqkp}{\QKPp}$. 
\end{quote}
}
\end{algorithm} 
The numerical  experiments performed had the following main purposes, \begin{itemize}
\item verify the impact of the  valid inequalities, \hyperlink{sciineq}{\SCIp}, \hyperlink{cilsineq}{\CILSp}, and \hyperlink{scilsineq}{\SCILSp}, when iteratively added to cut the current solution of a  relaxation of $\hyperlink{modelqkp}{\QKPp}$,
\item verify the effectiveness of the IPM described in Section \ref{sec:OptParametric} in decreasing the upper bound while optimizing the perturbation $Q_p$,
\item compute  the upper and lower bounds obtained with the proposed algorithmic approach described in \verb;CWICS; (Algorithm \ref{algframework}), and compare them, with the optimal solutions of the instances.
\end{itemize}

We coded \verb;CWICS;  in MATLAB,  version R2016b,  and ran the code on a notebook with an Intel Core i5-4200U CPU 2.30GHz, 6GB RAM,  running under  Windows 10. We used the primal-dual IPM implemented in Mosek, version 8, to solve the relaxation $\hyperlink{modelcqppert}{\CQP_{Q_p}}$, and, to solve the separation problems \hyperlink{miqp1}{\MIQP$_1$} and \hyperlink{milp2}{\MILP$_2$}, we use Gurobi, version 8. 

The input data used in the first iteration of  the IPM described in Algorithm  \ref{alg:ipm_iteration}  ($k=0$) are: $B_0=I$, $\mu^{0}=1$. 
We start with a matrix $Q^{0}_p$, such that $Q-Q^{0}_p$ is negative definite.
By solving $\hyperlink{modelcqppert}{\CQP_{Q_p}}$, with $Q_p:=Q^{0}_p$,  we obtain $x(Q^{0}_p)$, $X(Q^{0}_p)$, as its  optimal solution, and set $\nabla \pCQP(Q^{0}_p) := X(Q^{0}_p) - x(Q^{0}_p)x(Q^{0}_p)^T$. Finally, the positive definiteness of $Z^{0}$ and $\Lambda^{0}$ are assured by setting: 
$Z^{0}:=Q^{0}_p-Q$ and 
$\Lambda^{0}:= \nabla \pCQP(Q^{0}_p) +(2|\lambda_{\min}(\nabla \pCQP(Q^{0}_p)|+0.1)I$.

Our randomly generated test instances  were also used  by J. O. Cunha in \cite{Cunha2016}, who provided us with the instances data and with their optimal solutions.   
Each weight $w_j$, for $j\in N$,  was randomly selected in the interval $[1, 50]$, and the capacity  $c$, of the knapsack, was randomly selected in $[50,\sum_{j=1}^n w_j]$. The procedure used by Cunha to generate the instances was based on previous works \cite{Billionnet14,Caprara99,Chaillou89,gallo,Michelon96}.

The following labels identify the results presented in Tables \ref{tableCuts10_100} and \ref{table50_100f}--\ref{table100_100f}.
\begin{itemize}
\item   OptGap (\%):= ((upper bound - opt)/opt) $\times$ 100,   where opt is the optimal solution value (the relative optimality gap),
\item Time (sec) (the computational time to compute the bound),  
\item DuGap (\%) := (upper bound - lower bound)/(lower bound) $\times$ 100,   where the lower bound is computed as described in Sect. \ref{SectionFeaSolQAP} (the relative duality gap),
\item Iter (the number of iterations), 
\item Cuts (the number of cuts added to the relaxation),
\item Time$_{\tiny{\mbox{MIP}}}$ (sec) (the computational time to obtain cuts \hyperlink{cilsineq}{\CILS} and \hyperlink{scilsineq}{\SCILS}).
\end{itemize}

To get some insight into the effectiveness of the cuts proposed, we initially applied them to 10 small instances with $n=10$.  In Table \ref{tableCuts10_100}    
we present average results for this preliminary experiment, where we iteratively add the cuts 
 to the following  linear relaxation

\begin{equation}
\label{linsdpc}
({\tilde{\hyperlink{modellinsdp}{\LPR} }}) \, \begin{array}{lll}
   \max & \trace (QX)\cr
 \mbox{s.t.}  & \sum_{j=1}^n w_j x_j \leq c ,\\
&0\leq X_{ij}\leq 1, &\forall i,j\in N\\
&0\leq x_{i}\leq 1, &\forall i\in N\\
&X\in \mathbb{S}^n.
\end{array}
\end{equation}

In the first row of Table \ref{tableCuts10_100},  the results correspond to the solution of the linear relaxation $\tilde{\hyperlink{modellinsdp}{\LPR} }$ with no cuts. In \SCIp$_1$, we  add only the most violated cut from the $n$ cuts in  \hyperlink{sciineq}{\SCI}  to $\tilde{\hyperlink{modellinsdp}{\LPR} }$ at each iteration, and in the \hyperlink{sciineq}{\SCI} we  add all $n$ cuts. 
In \hyperlink{cilsineq}{\CILS} and \hyperlink{scilsineq}{\SCILSp}, we solve  MIQP and MILP   problems to find the most violated cut of each type. The last row of the table  (All) corresponds to results obtained when we add all $n$ cuts in  \hyperlink{sciineq}{\SCIp}, and one cut of each type, \hyperlink{cilsineq}{\CILS} \ and \hyperlink{scilsineq}{\SCILSp}. In these initial tests, we run up to 50 iterations, and in most cases,  stop  the algorithm when no more cuts are found to be added to the relaxation.

\begin{table}
\centering
\begin{tabular}{l|r|r|r|r|r}
\hline
	  \multicolumn{1}{l|}{\textbf{Method}}  & \multicolumn{1}{|c|}{\textbf{OptGap} }& \multicolumn{1}{|c|}{\textbf{Time}} & \multicolumn{1}{|c|}{\textbf{Iter} }& \multicolumn{1}{|c|}{\textbf{Cuts}} & \multicolumn{1}{|c}{\textbf{Time$_{\tiny{\mbox{MIP}}}$}} \\ 
  & \multicolumn{1}{|c|}{ (\%)} & \multicolumn{1}{|c|}{(sec)} &  &  & \multicolumn{1}{|c}{ (sec)} \\
\hline 
$\tilde {\mbox{LPR}}$                   & 38.082       & 0.35                     & 1.0         &                &                    \\ \hline
SCI$_1$                        & 36.703       & 32.38         & 1.1                  & 28.4           &                    \\ \hline
SCI           & 10.036       & 39.98                    & 3.0             & 364.1          &                    \\ \hline
CILS                     & 19.719       & 9.00                     & 2.7       & 82.2           & 6.91               \\ \hline 
SCILS                    & 9.121        & 266.81                & 50.0         & 794.3          & 198.12             \\ \hline
ALL           & 3.315        & 315.82                    & 28.3         & 646.6          & 264.91             \\ \hline
\end{tabular}
\caption{Impact of the cuts added to $\tilde {\mbox{LPR}}$  on 10 small instances ($n = 10$).}
\label{tableCuts10_100}
\end{table}

Figure \ref{gapsCuts10_100} depicts  the optimality gaps from Table  \ref{tableCuts10_100}. 
There is a  trade-off between the quality of the cuts and the computational time needed to find them. Considering a unique cut of each type, we  note that \hyperlink{scilsineq}{\SCILS} is the strongest cut (OptGap $= 9.121\%$), but  the computational time to obtain it, if compared to \hyperlink{cilsineq}{\CILS}  and \hyperlink{sciineq}{\SCIp}, is bigger. Nevertheless, a decrease in the times could be achieved with a heuristic solution for the separation problems, and also by the application of better stopping criteria for the cutting plane algorithm. 
We point out that using all cuts together we find a better upper bound than using each type of cut  separately (OptGap $= 3.315\%$). 

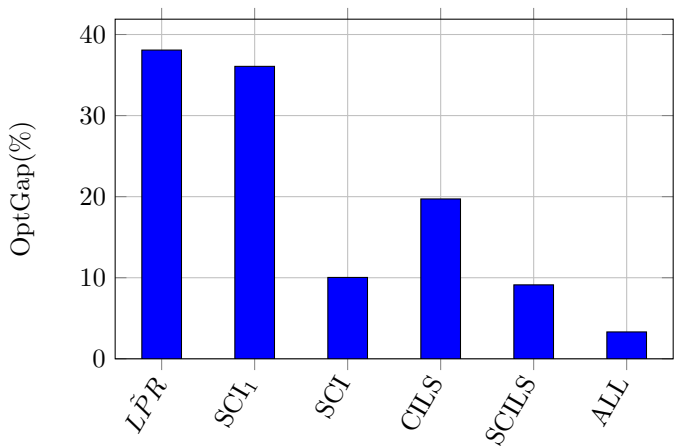
\begin{figure}[h]
\centering
\begin{tikzpicture}
        \begin{axis}[
            symbolic x coords={$\tilde{LPR}$, \text{SCI$_1$}, SCI, CILS, SCILS, ALL},
			ylabel={OptGap(\%)},    
			x tick label style={rotate=60,anchor=east},
            every node near coord/.append style={rotate=90, anchor=west},	
			grid=major,            
            width=9cm,
			height=6.1cm,
			ymin=0.0,
			ybar=0pt,
			bar shift=0pt,
			bar width=15,
			xtick=data         
          ]
            \addplot[ybar,fill=blue] coordinates {
                ($\tilde{LPR}$,   38.082)
                (\text{SCI$_1$},  36.073)
                (SCI,   10.036)
                (CILS,   19.719)
                (SCILS,   9.121)
                (ALL,   3.315)
            };
        \end{axis}        
\end{tikzpicture}
\caption{Average optimality gaps from Table \ref{tableCuts10_100}}
\label{gapsCuts10_100}
\end{figure}

We now analyze the effectiveness of our IPM in decreasing the upper bound while optimizing the perturbation $Q_p$. To improve the bounds obtained, besides the constraints in \eqref{linsdpc}, we  also consider in the initial relaxation,   the valid inequalities $X_{ii} = x_i$, the McCormick inequalities $X_{ij}\leq X_{ii}$,  and the valid inequalities obtained by multiplying the capacity constraint by each nonnegative variable $x_i$, and also by $(1-x_i)$, and then replacing each bilinear term $x_ix_j$ by $X_{ij}$. We then start the algorithm solving the following relaxation.

\begin{equation}
\label{linsdpb}
({\QPR}) \, \begin{array}{lll}
   \max &  x^T (Q-Q^{0}_p) x +\trace (Q^{0}_p X)\cr
 \mbox{s.t.}  & \sum_{j=1}^n w_j x_j \leq c ,\\
& \sum_{j=1}^n w_j X_{ij} \leq c X_{ii},&\forall i\in N\\
& \sum_{j=1}^n w_j (X_{jj} - X_{ij}) \leq c (1-X_{ii}),&\forall i\in N\\
 & X_{ii} = X_{ii},&\forall i\in N\\
 &X_{ij}\leq X_{ii}, &\forall i,j\in N\\
&0\leq X_{ij}\leq 1, &\forall i,j\in N\\
&0\leq x_{i}\leq 1, &\forall i\in N\\
&X\in \mathbb{S}^n.
\end{array}
\end{equation}

In order to evaluate the influence of the initial decomposition of $Q$ on the behavior of the IPM, we considered two initial decompositions.  In both cases, we  compute the eigendecomposition of $Q$, getting $Q=\sum_{i=1}^{n}\lambda_iv_iv_i'$. 

\begin{itemize}
\item For the first decomposition, we set  $Q_n:=\sum_{i=1}^{n}(-|\lambda_i|-1)v_iv_i'$, and $Q^{0}_p:= Q - Q_n/2$. We refer to this initial matrix $Q_p^0$ as $Q_p^a$.
\item For the second, we set  $Q_n:=\sum_{i=1}^{n}(\min\{\lambda_i,-10^{-6}\})v_iv_i'$, and $Q^{0}_p:= Q - Q_n/2$. We refer to this initial matrix $Q_p^0$ as $Q_p^b$.
\end{itemize}

In Table \ref{table50_100sdp}, we  compare the bounds obtained by our IPM after 20 iterations ($boundIPM_{20}$),  with the bounds given from the linear SDP relaxation obtained by taking $Q_p^{0}=Q$ in \eqref{linsdpb}, and adding to it the semidefinite constraint $X-xx'\succeq 0$ ($boundSDP$). As mentioned in Section \ref{sec:sdprel}, these are the best possible bounds that can be obtained by the IPM algorithm. We also show in Table \ref{table50_100sdp} how close to  $boundSDP$, the bound computed with the initial decomposition $Q_p^0$ ($boundIPM_1$) in relaxation \eqref{linsdpb} is.  The values presented in the table are
\begin{align*}
&gap_1  (\%)=(boundIPM_1-boundSDP)/boundSDP *100\\
&gap_{20}   (\%)=(boundIPM_{20}-boundSDP)/boundSDP *100
\end{align*}

\begin{table}[h]
\centering
\begin{tabular}{l|c|c|c|c}
\hline
\multicolumn{1}{l|}{$Q_p^0$}  & \multicolumn{1}{l|}{Inst}  & $n$    & \multicolumn{1}{|c|}{$gap_1$ (\%)} &  \multicolumn{1}{|c|}{$gap_2$ (\%)} \\ 
\hline
$Q_p^a$  & I1 &   50     & 0.30 & 0.01 \\
                        & I2 &    50   & 0.73  & 0.03\\
                        & I3 &    50    & 0.14 &  0.00\\
                        & I4 &    50   & 1.02 & 0.21\\
                        & I5 &    50  & 0.59 & 0.09  \\          
\cline{2-5}
  & I1 & 100      &  1.47    &0.14 \\
                        & I2 & 100   &  0.59   & 0.04\\
                        & I3 & 100  &  0.51    & 0.05 \\
                        & I4 & 100 &  1.38  & 0.26  \\
                        & I5 & 100&  0.73 &  0.06 \\       
\hline
$Q_p^b$  & I1 & 50   &  0.01   & 0.00  \\
                        & I2 & 50  &   0.30   &0.09 \\
                        & I3 & 50  &   0.08  &0.03  \\
                        & I4 & 50   &  0.10   & 0.02\\
                        & I5 & 50   &   0.03 & 0.03  \\           
\cline{2-5}
  & I1 & 100  &   0.04  & 0.00  \\
                        & I2 &100 & 0.02    & 0.01\\
                        & I3 & 100  &  0.03   &0.01  \\
                        & I4 & 100   & 0.12    &  0.04  \\
                        & I5 & 100   &0.02    & 0.01   \\    
\hline 
\end{tabular}
\caption{SDP bound vs  IPM bound at iterations 1 and 20, for two initial matrices $Q_p^0$.}
\label{table50_100sdp}
\end{table}

For the experiment reported in Table \ref{table50_100sdp}, we consider 10 instances with $n=50$ and $100$. We see from the results in Table \ref{table50_100sdp} that in 20 iterations, the IPM  closed the gap to the SDP bounds  for all instances. When starting from $Q_p^a$, we end up with an average  bound less than  0.1\% of the SDP bound,  while when starting from $Q_p^b$, this percentage decreases  to only 0.03\%. We also start from better bounds when considering  $Q_p^b$, and therefore, we use this matrix as the initial decomposition for the IPM in the next experiments.  The results in Table \ref{table50_100sdp} show that the IPM developed in this paper is effective to solve the parametric problem \eqref{eq:paramQQQ},  converging to bounds very close to  the solution of the SDP relaxation, which are their minimum possible values.

We finally present results obtained from the application of \verb;CWICS;,  considering the parametric quadratic relaxation,  the IPM,  and the cuts.
In Tables \ref{table50_100f} and \ref{table100_100f} we show the results for the same instances with $n=50$ and $n=100$ considered in the previous experiment.
 The cuts are added at every $m$ iterations of the IPM and the numbers of cuts added at each iteration are $n$ \hyperlink{sciineq}{\SCIp}, 5 \hyperlink{cilsineq}{\CILS} and 5 \hyperlink{scilsineq}{\SCILS}. 
Note that when solving each MIQP or MILP problem, besides the cut   \hyperlink{cilsineq}{\CILS} or \hyperlink{scilsineq}{\SCILS}, we also obtain a  cover inequality  \hyperlink{ciineq}{\CI}. We check if this \hyperlink{ciineq}{\CI} was already added to the  relaxation, and if not, we add it as well. We stop \verb;CWICS; when a maximum number of iterations is reached or when DuGap is sufficiently small. 


For the results presented in Table \ref{table50_100f} ($n=50$), we set the maximum number of iterations of the IPM equal to 100, and $m=10$.  The execution of each separation problem was limited to 3 seconds, and the best solutions obtained in this time limit was used to generate the cuts.

\begin{table}[h]
\centering
\begin{tabular}{c|c|r|c|c|c}
\hline
\multicolumn{1}{l|}{Inst}  &\multicolumn{1}{|c|}{OptGap }& \multicolumn{1}{|c|}{Time} &\multicolumn{1}{|c|}{DuGap } & \multicolumn{1}{|c|}{Iter}&  \multicolumn{1}{|c}{Time$_{\tiny{\mbox{MIP}}}$} \\ 
 & \multicolumn{1}{|c|}{ (\%)} & \multicolumn{1}{|c|}{(sec)} &  \multicolumn{1}{|c|}{ (\%)} &  & \multicolumn{1}{|c}{ (sec)} \\
\hline
 I1& 0.23	&1013.50	&0.27	&100		&641.98\\
I2&\textbf{0.00}	&632.50	&\textbf{0.00}&	64		&411.67\\
I3&                                \textbf{0.00}	&392.55	&\textbf{0.00}&	44 		&205.70\\
I4&                               \textbf{0.00}	&289.97	&\textbf{0.00}	&31		&160.37\\
I5&                               0.21	&1093.60	&0.37	&10	&698.04\\
\hline
\end{tabular}
\caption{Results for CWICS ($n=50$). }
\label{table50_100f}
\end{table}

For the results presented in Table \ref{table100_100f} ($n=100$), we set the maximum number of iterations of the IPM equal to 20, and $m=4$.  In this case, the execution of each separation problem was limited to 10 seconds. 

\begin{table}[h]
\centering
\begin{tabular}{c|c|c|c|c|c}
\hline
\multicolumn{1}{l|}{Inst}  &\multicolumn{1}{|c|}{OptGap }& \multicolumn{1}{|c|}{Time} &\multicolumn{1}{|c|}{DuGap } & \multicolumn{1}{|c|}{Iter}&  \multicolumn{1}{|c}{Time$_{\tiny{\mbox{MIP}}}$} \\ 
 & \multicolumn{1}{|c|}{ (\%)} & \multicolumn{1}{|c|}{(sec)} &  \multicolumn{1}{|c|}{ (\%)} &  & \multicolumn{1}{|c}{ (sec)} \\
\hline
I1 & \textbf{0.00}  & 2035.30 &\textbf{0.00} & 20       & 737.86\\
I2 & 0.25              & 2177.30 & 0.65 & 20       & 919.41\\
I3 & \textbf{0.00}  & 2007.10  &  \textbf{0.00} & 20       & 773.00 \\
I4 & 0.12               & 1885.90 & 0.84 & 20         & 828.98 \\
I5 & 0.04               & 2309.50 & 0.20 & 20        & 970.49 \\
\hline
\end{tabular}
\caption{Results for CWICS ($n=100$). }
\label{table100_100f}
\end{table}

We note from the results in Tables \ref{table50_100f} and \ref{table100_100f}, that the alternation between the iterations of the IPM to improve the perturbation $Q_p$ of the relaxation and the addition of cuts to the relaxation, changing the search direction of the IPM, is an effective approach to compute bounds for $\hyperlink{modelqkp}{\QKP}$. Considering the stopping criterion imposed to \verb;CWICS;, it was able to converge to the optimum solution of three out of five instances with $n=50$ and of two out of five instances with $n=100$. The average optimality gap for all ten instances is less than   $0.1\%$.  The heuristic applied also computed good solutions for the problem. The average duality gap for the 10 instances is less than $0.25\%$.

We note that our algorithm spends a high percentage of its running time solving the separation problems, and also solving the linear systems to define the direction of improvement in the IPM algorithm. The running time of both procedures can be improved by a more judicious implementation. There are two parameters in \verb;CWICS; that can also be better analyzed and tuned to improve the results, namely, $m$ and the time limit for the execution of the separation problems. As mentioned before, these problems could still be solved by heuristics. Finally, we note that the alternation between the IPM iterations and addition of cuts to the relaxation could be  combined with a branch-and-bound algorithm in an attempt to converge faster to the optimal solution. In this case, the cuts added to the relaxations would include the cuts that define the branching and the update on $Q_p$ would depend on the branch of the enumeration tree. These are directions for the continuity of the research on this work.

\section{Conclusion}
\label{sec:conclusion}
In this paper we present an algorithm named \verb;CWICS; (Convexification With Integrated Cut Strengthening),  that iteratively improves the upper bound for the quadratic knapsack problem (QKP). The initial relaxation for the problem is given by a parametric convex quadratic problem, where the Hessian $Q$ of the objective function of the QKP is perturbed by a matrix parameter $Q_p$, such that $Q-Q_p\preceq 0$. Seeking for the best possible bound, the concave term  $x^T(Q-Q_p)x$, is then kept in the objective function of the relaxation and the remaining part, given by  $x^TQ_px$ is linearized through the standard approach that lifts the problem to space of symmetric matrices defined by $X:=xx^T$. 

We present a primal-dual interior point method (IPM), which update the perturbation $Q_p$ at each iteration of \verb;CWICS;  aiming at reducing the upper bound given by the relaxation. We also present new classes of cuts that are added during the execution of \verb;CWICS;, which are defined on the lifted variable $X$, and derived from cover inequalities and the binary constraints.

We show that both the IPM and the cuts generated are effective in improving the upper bound for the QKP and apply them in an integrated procedure  that alternates between the optimization of the perturbation $Q_p$ and the strengthening of the feasible set of the relaxation with the addition of the cuts.   We note that these procedures could be applied to more general binary indefinite quadratic problems as well. The separation problems described to generate the cuts could also be solved heuristically, in order to accelerate the process.

We note that the search for the best perturbation $Q_p$, by our IPM, is updated with the inclusion of cuts to the relaxation. In the set of cuts added, we could also consider cuts defined by the branching procedure in a branch-and-bound algorithm. In this case, we could have the perturbation $Q_p$ optimized during all the descend on the branch-and-bound tree, considering the cuts that had been added to the relaxations. 

Finally, we show that if the positive semidefinite constraint $X-xx^T\succeq 0$ was introduced in the relaxation of the QKP, or any other  indefinite quadratic problem (maximizing the objective function), then the decomposition of objective function, that leads to a convex quadratic SDP relaxation, where a perturbed concave part of the objective is kept, and the remaining part is linearized, is not effective. In this case the best bound is always attained when the whole objective function is linearized, i.e., when the perturbation $Q_p$ is equal to $Q$. This observation also relates to the well known DC (difference of convex) decomposition of indefinite quadratics that have been used in the literature to generate bounds for indefinite quadratic problems. Once more, in case the positive semidefinite  constraint is added to the relaxation, the DC decomposition is not effective anymore, and the alternative linear SDP relaxation leads to the best possible bound.  As corollary from this result, we see that the bound given by the convex quadratic relaxation cannot be better than the bound given by the corresponding linear SDP relaxation. 

\section*{Acknowledgements}
We thank J. O. Cunha for providing us with the test instances used in our experiments. M. Fampa was partially supported by CNPq (National Council for Scientific and
Technological Development) Grant 303898/2016-0, D. C. Lubke was supported by Research Grants from CAPES (Brazilian Federal Agency for Support and Evaluation of Graduate Education) Grant 88881.131629/2016-01, CNPq Grant 142143/2015-4 and partially supported by PSR - Power Systems Research, Rio de Janeiro, Brazil. 


\bibliographystyle{plain}
\label{bibliog}
%
\def\cprime{$'$} \def\cprime{$'$} \def\cprime{$'$}
  \def\udot#1{\ifmmode\oalign{$#1$\crcr\hidewidth.\hidewidth
  }\else\oalign{#1\crcr\hidewidth.\hidewidth}\fi} \def\cprime{$'$}
  \def\cprime{$'$} \def\cprime{$'$}

\end{document}